\def\wtd{\widetilde}
\def\what{\widehat}
\DeclareMathOperator{\diag}{diag}
\DeclareMathOperator{\sign}{sign}
\DeclareMathOperator{\HH}{H}
\DeclareMathOperator{\T}{T}
\def\ba{\pmb{a}}
\def\bb{\pmb{b}}
\def\bc{\pmb{c}}
\def\be{\pmb{e}}
\def\Bf{\pmb{f}}
\def\bp{\pmb{p}}
\def\bq{\pmb{q}}
\def\bw{\pmb{w}}
\def\bx{\pmb{x}}
\def\bz{\pmb{z}}
\renewcommand{\Re}{\mbox{\sc Re\,}}
\renewcommand{\Im}{\mbox{\sc Im\,}}
\newtheorem{proposition}{Proposition}[section]
\newtheorem{theorem}{Theorem}[section]
\newtheorem{lemma}{Lemma}[section]
\theoremstyle{definition}
\newtheorem{remark}{Remark}[section]
\newtheorem{example}{Example}[section]
\numberwithin{equation}{section}
\numberwithin{figure}{section}
\numberwithin{table}{section}
\def\wtd{\widetilde}
\def\what{\widehat}
\def\ba{\pmb{a}}
\def\bb{\pmb{b}}
\def\bc{\pmb{c}}
\def\be{\pmb{e}}
\def\Bf{\pmb{f}}
\def\bp{\pmb{p}}
\def\bq{\pmb{q}}
\def\bw{\pmb{w}}
\def\bx{\pmb{x}}
\def\bz{\pmb{z}}
\def\bzs{\mathbf{0}}
\def\tu{\mathfrak{u}}
\def\diag{{\rm diag}}
\def\scrR{\mathscr{R}}
\def\wtd{\widetilde}
\def\what{\widehat}
\def\bbC{\mathbb{C}}
\def\bbP{\mathbb{P}}
\def\bbR{\mathbb{R}}
\renewcommand{\algorithmicrequire}{\textbf{Input:}}
\renewcommand{\algorithmicensure}{\textbf{Output:}}
\numberwithin{equation}{section}
\numberwithin{figure}{section}
\numberwithin{table}{section}
\title{Interpolation constrained rational minimax approximation with barycentric representation}
\author{Lei-Hong Zhang\thanks{Corresponding author. School of Mathematical Sciences, Soochow University, Suzhou 215006, Jiangsu, China. This work was
 supported in part by the National Natural Science Foundation of China (NSFC-12471356, NSFC-12371380), Jiangsu Shuangchuang Project (JSSCTD202209), and Academic Degree and Postgraduate Education Reform Project of Jiangsu Province.
        Email: {\tt longzlh@suda.edu.cn}.} \and Ya-Nan Zhang\thanks{School of Mathematical Sciences, Soochow University, Suzhou 215006, Jiangsu, China. Email: {\tt ynzhang@suda.edu.cn}.} 
        }
\date{ } 
\begin{document}

\maketitle

\begin{abstract}
In this paper, we propose a novel  dual-based Lawson's method, termed {\sf b-d-Lawson},  designed for addressing  the rational minimax approximation under specific interpolation conditions. The {\sf b-d-Lawson} approach incorporates two pivotal components that have been recently gained prominence in the realm of the rational approximations: the {\tt b}arycentric representation of the rational function and the {\tt d}ual framework for tackling minimax approximation challenges. The employment of  barycentric formulae enables a streamlined parameterization of the rational function, ensuring natural satisfaction of interpolation conditions while mitigating numerical instability typically associated with Vandermonde basis matrices when monomial bases are utilized. This   enhances both the accuracy and computational stability of the method. To address the bi-level min-max structure, the dual framework effectively transforms the challenge into a max-min dual problem, thereby facilitating the efficient application of Lawson's iteration. The integration of this dual perspective is crucial for optimizing the approximation process. We will discuss several applications of interpolation-constrained rational minimax approximation and illustrate numerical results to evaluate the performance of the {\sf b-d-Lawson} method. 
\end{abstract}

\medskip
{\bf Key words. Rational minimax approximation, Rational interpolation, Barycentric formula, Lawson algorithm, Dual theory, Convergence analysis}    
\medskip

{\bf AMS subject classifications. 41A50, 41A20, 65D15,  90C46}

\section{Introduction}\label{sec_intro}

Computing the minimax (also known as best or Chebyshev) polynomial and/or rational approximations for a given complex or real function $f$ is a classical problem in approximation theory \cite{tref:2019a}. Let $\bbP_n$ be the set of complex polynomials with degree $\le n$ and  $\scrR_{(n_1,n_2)}:=\{{p}/{q}|p\in \bbP_{n_1},~0\not\equiv q\in\bbP_{n_2}\}$. We call   $\xi\in \scrR_{(n_1,n_2)}$ a rational function of type $(n_1,n_2)$, and  conveniently, we denote  $\bbP_n=\scrR_{(n,0)}$ and $\scrR_{(n)}=\scrR_{(n,n)}$. Given a compact set $\cal {X}\subset \bbC$ in the complex plane,  the traditional minimax approximation is 
\begin{equation}\label{eq:bestf0}
\inf_{\xi \in\scrR_{(n_1,n_2)}}\|f-\xi\|_{\infty},
\end{equation}
where $\|f-\xi\|_{\infty}$ stands for the  Chebyshev norm defined on  ${\cal X}$ as 
$\|f-\xi\|_{\infty}=\max_{x\in {\cal X}}|f(x)-\xi(x)|$. The set $\cal {X}\subset \bbC$ can be an interval (or the union of intervals) in the real line, a domain in $\bbC$ enclosed by Jordan curves, or a set containing discrete nodes in $\bbC$.

Interpolation constrained minimax polynomial/rational approximations are variants of the classical \eqref{eq:bestf0} and can arise in many situations. For instance, as the closed form solution $\xi$ of \eqref{eq:bestf0} is generally hard, numerical algorithms for \eqref{eq:bestf0} usually are designed based on sampled data $(x_j,f_j)$ from $f$ (for example the rational approximation of the Riemman zeta function \cite{nast:2023}). In this case, except for certain samples $(t_j,y_j)$ with $y_j=f(t_j)$ for $1\le j\le \ell$, errors for $f_j\approx f(x_j)$ will be introduced in other data. In finite precision, this is case for almost all functions $f$ numerically.  Given {\it interpolation} data $(t_j,y_j)$ for $1\le j\le \ell$ and {\it sampled} data $(x_j,f_j)$ for $1\le j\le m$, a general interpolation constrained minimax polynomial/rational approximation can then be put as 
\begin{equation}\label{eq:baryInterpMinmax0}
\inf_{\begin{subarray}{c} \xi\in \scrR_{(n_1,n_2)}\\
            \xi(t_j)=y_j, ~1\le j\le \ell\end{subarray}
            }\|\xi(\bx)-\Bf\|_\infty,~~{\rm where}~~\|\xi(\bx)-\Bf\|_\infty=\max_{1\le j\le m}|\xi(x_j)-f_j|,
\end{equation}
$\Bf=[f_1,\dots,f_m]^{\T}\in \bbC^m$ and $\bx=[x_1,\dots,x_m]^{\T}\in \bbC^m$.

In the literature, the interpolation constrained polynomial/rational minimax approximations have been discussed in, e.g., \cite{cops:2002,fifr:1990,kali:2008,lixu:1990,lixu:1990b,neva:1919,pick1916,tawi:1974,wals:1932}. 
As a special case of \eqref{eq:baryInterpMinmax0}, the interpolation constrained minimax polynomial approximation arises in the convergence analysis of the Krylov subspace methods for the linear system $A\bz=\bb$. It is known that the accuracy of the $k^{\rm th}$ iteration $\bz_k^{\rm cg}$ in approximating $\bz^*=A^{-1}\bb\in \bbR^n$ from the  conjugate gradient iteration (CG) is governed by (see e.g., \cite[Chapter 6.11]{saad:2003})
$\|\bz_k^{\rm cg}-\bz^*\|_A\le\epsilon_k   \cdot\|\bz_0^{\rm cg}-\bz^*\|_A,$
where 
\begin{equation}\label{eq:minimaxployInt}
\epsilon_k=\min_{p\in \bbP_k,~p(0)=1~}\max_{1\le j\le n}|p(\lambda_j)|
\end{equation}
and $\lambda_j>0$ are eigenvalues of the positive definite matrix $A\in \bbR^{n\times n}$.
Analogously, for the GMRES method with  $A=X\diag(\lambda_1,\dots,\lambda_n)X^{-1}$ and $\lambda_j\in \bbC$, the $k^{\rm th}$ iteration $\bz_k^{\rm gmres}$  gives the bound  (see e.g., \cite[Chapter 6.11]{saad:2003} and \cite{fifr:1990})
$\|\bb-A\bz_k^{\rm gmres}\|_2\le\epsilon_k \cdot  \kappa_2(X) \cdot\|\bb-A\bz_0^{\rm gmres}\|_2,$
where $\kappa_2(X)=\|X\|_2 \|X^{-1}\|_2$ is the condition number of $X$ and $\epsilon_k$ is defined \eqref{eq:minimaxployInt}. 

For the rational case, the real and continuous version of \eqref{eq:baryInterpMinmax0} are discussed in {\cite{ehle:1976,lixu:1990,lixu:1990b}, \cite[Section 2.2]{wang:1980}  and \cite[Section 6.2]{wazh:2004},  where ${\cal X}=[a,b]$ is an interval. Besides the interpolation conditions $\xi(t_j)=y_j
~(1\le j\le \ell)$, high-order derivative requirements $\xi^{(i)}(t_j)=f^{(i)}(t_j), ~j=1,\dots,\ell,~i=0,\dots,j_i$ are also imposed}. Conditions for the existence, uniqueness and some characterizations of such minimax rational approximations are established.

Complex versions of \cite{ehle:1976,lixu:1990,lixu:1990b} can be found in, e.g., \cite{lubr:2011,neva:1919,pick1916}. As a special and important case, indeed, the Nevanlinna-Pick interpolation  \cite{pick1916,neva:1919} is an important problem in complex analysis and operator theory, which describes a particular  type of complex interpolation problem. It  was originally studied by Pick in 1916 \cite{pick1916}  and independently by Nevanlinna in 1919 \cite{neva:1919}. Let ${\bf H}^\infty$ be the space of bounded analytic functions in the unit disc ${\cal D}=\{z\in \bbC: |z|<1\}$.  The finite case of Nevanlinna-Pick interpolation problem finds a holomorphic map $\xi: {\cal D} \rightarrow {\cal D}$ such that $\xi(t_j)=y_j, ~(1\le j\le \ell)$, where   $\{t_1,\dots,t_\ell\}\subset {\cal D}$ and values $\{y_1,\cdots, y_\ell\}\subset{\cal D}$ are given.
 {Equivalently, we 
 $$
 {\rm find }~ \xi\in  {\bf H}^\infty, ~ \|\xi\|_\infty \le  1~ {\rm with~}  \xi(t_i) = y_i, ~i = 1,\dots, \ell,
 $$
 where  $\|\xi\|_\infty$ is the essential supremum of $|\xi(z)|$ on the unit circle (a consequence of the maximum modulus principle).}
 Herglotz et. al. \cite{hesp:1991} proved that  the Nevanlinna-Pick interpolation exists and the minimal norm solution  $\xi$ is unique which is also rational with the degrees of both numerator and denominator  less than $\ell$ (see also \cite{alyo:1983,lubr:2011}). Hence, computing the minimal  Chebyshev norm solution is amount to finding the minimax rational approximation in ${\bf H}^\infty$ subject to the interpolation conditions $\xi(t_j)=y_j~(1\le j\le \ell)$ (see \cite[Problem I]{alyo:1983}).  Applications of the Nevanlinna-Pick interpolation problem include the sensitivity minimization \cite[Equ. (2.1)]{bygl:2001} and the maximal power transfer \cite[Section II]{bygl:2001}   arising from systems and control.

With the setting of \eqref{eq:baryInterpMinmax0},  in this paper, we introduce  a numerical method, referred to as {\sf b-d-Lawson},  for addressing interpolation constrained minimax  rational approximations. We remark that  {\sf b-d-Lawson} specifically targets scenarios where the interpolated points represent a small,  carefully selected  subset of sampling points that require exact interpolation without outliers; it reduces to a method for the classical rational minimax approximation when no interpolation conditions are imposed. Our treatment is based on two key points that have been recently gained prominence in the realm of the rational approximations. The first crucial component is the barycentric formula to represent the target rational $\xi$ in \eqref{eq:baryInterpMinmax0}. We will see that  the use of the barycentric formula facilitates us to match the interpolation conditions $\xi(t_j)=y_j~(1\le j\le \ell)$ naturally, and on the other hand, alleviates the numerical instability \cite{brnt:2021,zhsl:2024} caused by the Vandermonde basis matrices from using the monomial bases.  The second key component  is the dual framework \cite{yazz:2023,zhan:2026,zhyy:2025,zhzz:2025} for the minimax problem and the design of proper Lawson's iteration. The recent developments in \cite{yazz:2023,zhyy:2025} reformulate the minimax approximation into the (max-min) dual problem and explain that the classical Lawson iteration is an ascent convergent method for solving the dual problem. 
A new dual-based Lawson iteration \cite{zhyy:2025}, termed   {\sf d-Lawson}, was proposed from this dual framework and its convergence analysis has been discussed in \cite{zhha:2025}. Due to the incorporation of these two key components, namely, the barycentric representation and the dual framework, the new method has been aptly named {\sf b-d-Lawson}.

 The structure of this paper is as follows: In Section \ref{sec:baryIntminmax}, we will introduce the barycentric formula as a representation for the rational functions and parametrize the feasible solution of \eqref{eq:baryInterpMinmax0} using the interpolation data $\{(t_j,y_j)\}_{j=1}^\ell$. With the barycentric representation, we will establish the   max-min structured dual problem of \eqref{eq:baryInterpMinmax0} in Section \ref{sec:dual}, and provide the detailed computational procedure for the dual function in Section \ref{subsec:dualfun}. The duality theory will be discussed in Section \ref{subsec:duality}; particularly,   weak duality ensures that any   dual function value provides a lower bound for the infimum  of \eqref{eq:baryInterpMinmax0}, while   strong duality generally says that we can solve the solution of \eqref{eq:baryInterpMinmax0} from the dual problem.  We shall show that   weak duality always holds but   strong duality is true under certain conditions. Based on the dual problem, the method of {\sf b-d-Lawson} will be introduced in Section \ref{sec:bdlawson}, where Lawson's idea is used to update the dual variables. Numerical results of {\sf b-d-Lawson} from several examples  will be reported in Section \ref{sec:numerical} and concluding remarks are drawn in Section \ref{sec:conclusion}.

 \section{The interpolation constrained rational minimax approximation}\label{sec:baryIntminmax}
\subsection{Barycentric representation}
The barycentric formula was originally used in the polynomial interpolation; Berrut and Trefethen provided a comprehensive  review for the barycentric Lagrange interpolation \cite{betr:2004}. Let  $\{(t_j,y_j)\}_{j=1}^{\ell}$ be the given interpolation data and $\{t_j\}_{j=1}^{\ell}$ be distinct. The unique interpolation polynomial $p\in \bbP_{\ell-1}$ can be expressed by 
\begin{equation}\label{eq:barypoly}
p(x)= {\sum_{j=1}^\ell\frac{\beta_j y_j}{x-  t_j}}\bigg/{\sum_{j=1}^\ell\frac{\beta_j}{x-t_j}},~~{\rm with}~\beta_j=\frac{1}{\prod_{k\ne j}(t_j-t_k)},~1\le j\le \ell.
\end{equation}
This is the  barycentric formula (which is called the ``second (true) form of the barycentric formula" by Rutishauser \cite{ruti:1976}); see also \cite{betr:2004,henr:1962,henr:1979,ruti:1976,tayl:1945}.  For  $\beta_j$, 
\cite{wern:1984} considers  the computation of these weights in $(\ell-1)^2/2$ flops, and points out its interpolation property to the  rational case. Indeed, Schneider and Werner \cite{scwe:1986} noticed that for any set of non-zero weights $\{\beta_j\}_{j=1}^{\ell}$, the function \eqref{eq:barypoly} is a rational interpolant\footnote{{Indeed, for any $\beta_j\ne 0 ~(1\le j\le \ell)$, by $\lim_{x\rightarrow t_j}\frac{\sum_{j=1}^\ell\frac{\beta_jy_j}{x-t_j}}{\sum_{j=1}^\ell\frac{\beta_j}{x-t_j}}=y_j$, the barycentric formulation \eqref{eq:barypoly} $p(x)\in \scrR_{(\ell-1)}$ satisfies the interpolation conditions $p(t_j)=y_j$ for $j=1,\dots,\ell$.}}   of degree at most $\ell-1$. 

In \eqref{eq:barypoly},  we notice that both the numerator and denominator are represented by the singular basis $\{\frac{1}{x-t_j}\}_{j=1}^\ell$ which vary in size just algebraically, instead of the monomial basis $\{x^j\}_{j=0}^{\ell-1}$ which may vary exponentially (see \cite[Section 2.2]{fint:2018}).  Backward and forward error analysis of evaluating the interpolant $p(x)$ is established in \cite{high:2004a}, indicating that the barycentric formula \eqref{eq:barypoly}  is  forward stable  provided that  the associated Lebesgue constant\footnote{For the case $-1\le t_i\le 1$, the Lebesgue constant $\Lambda_{\ell-1}=\sup_{x\in [-1,1]}\sum_{j=1}^\ell|\ell_j(x)|$, where $\ell_j(x)=\frac{\prod_{k\ne j}(x-t_k)}{\prod_{k\ne j}(t_j-t_k)}$ (see \cite[Chapter 2]{chli:2000}). } $\Lambda_{\ell-1}$ is small.

The extension of the barycentric interpolation polynomial of \eqref{eq:barypoly} to the rational case has been  made in, e.g., \cite{angp:2025,fint:2018,flho:2007,henr:1979,ioni:2013,limp:2022,luws:2020,nase:2018,natr:2020,scwe:1986};  particularly, recent works  in \cite{fint:2018,nase:2018,natr:2020} {discuss} the non-interpolation mode \begin{equation}\label{eq:barycentricR}
\xi(x)= {\sum_{j=1}^\ell\frac{\alpha_j}{x-t_j}}\bigg/{\sum_{j=1}^\ell\frac{\beta_j}{{x}-t_j}}\in \scrR_{(\ell-1)},
\end{equation}
where $\{\alpha_j\}_{j=1}^\ell$, $\{\beta_j\}_{j=1}^\ell$ are coefficients and $\{t_j\}_{j=1}^\ell$ are {\it support points}. Different from the using of monomial bases, the resulting basis matrices for the numerator and denominator are the Cauchy matrices which generally  have better condition numbers than  the Vandermonde matrices \cite{beck:2000,high:2002,li:2008b,pan:2016,tref:2019a}. Regarding the evaluation of  $\xi(x)$, the rational approximation in the barycentric representation \eqref{eq:barycentricR} is backward stable (in the sense of the perturbation of the coefficients $\{\alpha_j\}_{j=1}^\ell$ and $\{\beta_j\}_{j=1}^\ell$) and can be forward stable when the support points  $\{t_j\}_{j=1}^\ell$ are suitably chosen in evaluation of not small $|\xi(x)|$ (see \cite{fint:2018,nase:2018,natr:2020} for more details). Moreover, the barycentric formula \eqref{eq:barycentricR} provides a parameterization for $\scrR_{(\ell-1)}$:  
\begin{theorem}{\rm (\cite[Theorem 3.1]{natr:2020})}\label{thm:barycentric}
Let $\{t_j\}_{j=1}^{\ell}, t_j\in \bbC$ be $\ell$ distinct nodes. As $\alpha_1,\dots, \alpha_{\ell}$ and $\beta_1,\dots,\beta_{\ell}$ range over all complex values, with at least one $\beta_j$ being nonzero, the functions \eqref{eq:barycentricR} 
range over the set of all rational functions of $\scrR_{(\ell-1)}$.
\end{theorem}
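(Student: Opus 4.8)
The plan is to reduce the statement to one elementary linear‑algebra fact and then clear denominators. Write $\pi(x):=\prod_{k=1}^{\ell}(x-t_k)$ and, for $j\in[\ell]$, $\pi_j(x):=\prod_{k\neq j}(x-t_k)$, so that $\pi_j\in\bbP_{\ell-1}$ and, multiplying numerator and denominator of \eqref{eq:barycentricR} by $\pi$,
\[
\xi(x)=\frac{\sum_{j=1}^{\ell}\alpha_j\,\pi_j(x)}{\sum_{j=1}^{\ell}\beta_j\,\pi_j(x)}.
\]
The whole theorem follows once we know that $\{\pi_j\}_{j=1}^{\ell}$ is a basis of $\bbP_{\ell-1}$.

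For the basis claim, note that since the $t_k$ are distinct we have $\pi_j(t_i)=0$ for $i\neq j$ and $\pi_j(t_j)=\prod_{k\neq j}(t_j-t_k)\neq 0$; evaluating a vanishing linear combination $\sum_j c_j\pi_j$ successively at $t_1,\dots,t_\ell$ forces every $c_j=0$, so the $\ell$ polynomials $\pi_j$ are linearly independent in the $\ell$‑dimensional space $\bbP_{\ell-1}$ and hence form a basis (up to the nonzero scalar $\pi_j(t_j)$, $\pi_j$ is just the $j$‑th Lagrange cardinal polynomial). The ``$\subseteq$'' inclusion is now immediate from the displayed identity: the numerator and denominator are polynomials of degree $\le\ell-1$, and if $\beta_{j_0}\neq 0$ then evaluating the denominator at $t_{j_0}$ leaves only $\beta_{j_0}\pi_{j_0}(t_{j_0})\neq 0$, so the denominator is not identically zero and $\xi\in\scrR_{(\ell-1)}$.

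For ``$\supseteq$'', take an arbitrary $\xi=p/q\in\scrR_{(\ell-1)}$ with $p,q\in\bbP_{\ell-1}$ and $q\not\equiv 0$. Expanding in the basis $\{\pi_j\}$ gives unique scalars $\alpha_j,\beta_j$ with $p=\sum_j\alpha_j\pi_j$ and $q=\sum_j\beta_j\pi_j$ (explicitly $\alpha_j=p(t_j)/\pi_j(t_j)$ and $\beta_j=q(t_j)/\pi_j(t_j)$). Dividing both expansions by $\pi$ puts $\xi$ exactly in the form \eqref{eq:barycentricR} with these coefficients; and since $q\not\equiv 0$ has at most $\ell-1$ roots, it is nonzero at some node $t_j$, whence $\beta_j\neq 0$, so the admissibility requirement ``at least one $\beta_j$ nonzero'' is met.

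The argument is essentially routine; the only place needing a little care is the bookkeeping around the denominator, namely verifying that the admissibility condition ``at least one $\beta_j\neq 0$'' matches exactly ``$q\not\equiv 0$'' under the correspondence, and using the distinctness of the $t_k$ both in the basis argument and to rule out that a nonzero coefficient vector $(\beta_j)$ produces the zero denominator $\sum_j\beta_j\pi_j$. I do not anticipate any genuine obstacle beyond this.
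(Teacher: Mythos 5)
Your proof is correct. The paper itself offers no proof of this theorem---it is quoted from \cite{natr:2020}, and even the proof of its generalization (Theorem \ref{lem:baryrational2}) merely defers to \cite{bemi:1997b}---and your argument of clearing denominators by $\pi(x)=\prod_k(x-t_k)$ and observing that the node polynomials $\pi_j$ are (up to nonzero scalars) the Lagrange cardinal basis of $\bbP_{\ell-1}$, with the ``some $\beta_j\neq 0$'' condition matching ``$q\not\equiv 0$'', is exactly the standard argument used in the cited source.
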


 \subsection{The interpolation constrained rational minimax approximation}
\label{sec:RminimaxInterp}
Now, we consider the problem \eqref{eq:baryInterpMinmax0} with $n=n_1=n_2.$
Let $\{(t_j,y_j)\}_{j=1}^\ell$ and $0\le \ell\le n+1$ be the given data to be interpolated by a rational approximation $\xi\in \scrR_{(n)}$ where  $t_j\in {\cal I}:=\{t_i\}_{i=1}^\ell \subset \bbC$ for $1\le j\le \ell$ are distinct. Also, we can choose other $n-\ell+1$ nodes $\{t_j\}_{j=\ell+1}^n$ in certain ways (see Section \ref{sec:bdlawson}) to form the overall $n+1$ distinct {\it support points} ${\cal T}:=\{t_j\}_{j=1}^{n+1}={\cal I}\cup {\cal \what I}$, where ${\cal \what I}=\{t_j\}_{j=\ell+1}^n$. Thus, we have the following parameterization 
\begin{equation}\label{eq:baryform}
\xi(x)=\frac{p(x)}{q(x)}:=\frac{\sum_{j=1}^\ell\frac{\beta_jy_j}{x-t_j}+\sum_{j=\ell+1}^{n+1}\frac{\alpha_j}{x-t_j}}{\sum_{j=1}^\ell\frac{\beta_j}{x-t_j}+\sum_{j=\ell+1}^{n+1}\frac{\beta_j}{x-t_j}}.
\end{equation}
It can be seen \cite{bemi:1997,wern:1984} that 
\begin{equation}\label{eq:interpcond}
\xi(t_j)=\lim_{x\rightarrow t_j}\xi(x)=y_j,~~\forall 1\le j\le \ell ~ {\rm with}~\beta_j\ne 0, 
\end{equation}
i.e., $\xi(x)$ given in \eqref{eq:baryform}  fulfills the interpolation conditions \eqref{eq:interpcond}. Conversely, for any $r\in \scrR_{(n)}$ satisfying the interpolation conditions  $r(t_j)=y_j~(1\le j\le \ell)$,  the following   theorem  guarantees that $r$ can be parametrized by \eqref{eq:baryform}. This is a generalization of  $\ell=0$ (Theorem \ref{thm:barycentric}) and $\ell=n+1$  (\cite[Lemma 2.1]{bemi:1997b}).

\begin{theorem}
\label{lem:baryrational2}
 Suppose $\{t_j\}_{j=1}^{n+1}, t_j\in \bbC$ are $n+1$ distinct nodes. For $0\le \ell\le n+1$, let $\{y_j\}_{j=1}^{\ell}, y_j\in \bbC$. Then any $r\in \scrR_{(n)}$ satisfying    $r(t_j)=y_j~(1\le j\le \ell)$  can be written as  the barycentric form \eqref{eq:baryform}, for some weights $\{\alpha_j\}_{j=\ell+1}^{n+1}$ and  $\{\beta_j\}_{j=1}^{n+1}$; in particular, if $t_j ~(\ell+1\le j\le n+1)$ is a pole of $r$, then $\beta_j=0$.
\end{theorem}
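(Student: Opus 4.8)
The plan is to pass to the reduced (coprime) form of $r$ and then recognize the barycentric numerator and denominator in \eqref{eq:baryform} as ordinary Lagrange interpolants at the $n+1$ support points. Set $\omega(x)=\prod_{k=1}^{n+1}(x-t_k)$ and $\omega_j(x)=\omega(x)/(x-t_j)=\prod_{k\ne j}(x-t_k)\in\bbP_n$, and $w_j:=\omega_j(t_j)=\prod_{k\ne j}(t_j-t_k)\ne 0$ since the nodes are distinct. The polynomials $\omega_j/w_j$ are the Lagrange cardinal functions for $\{t_j\}_{j=1}^{n+1}$, so every $h\in\bbP_n$ obeys $h=\sum_{j=1}^{n+1}\tfrac{h(t_j)}{w_j}\,\omega_j$. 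Multiplying the numerator and the denominator of \eqref{eq:baryform} by $\omega$ produces the two polynomials $A(x)=\sum_{j=1}^\ell\beta_jy_j\,\omega_j(x)+\sum_{j=\ell+1}^{n+1}\alpha_j\,\omega_j(x)$ and $B(x)=\sum_{j=1}^{n+1}\beta_j\,\omega_j(x)$, both of degree $\le n$; in particular the barycentric expression equals $A/B$ as a rational function whenever $B\not\equiv 0$.

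First I would write $r=P/Q$ in lowest terms. Since cancelling common factors does not raise degrees and $r\in\scrR_{(n)}=\scrR_{(n,n)}$, we have $P,Q\in\bbP_n$ with $Q\not\equiv 0$ and $\gcd(P,Q)=1$. Because each $y_j\in\bbC$ is a finite value, the interpolation condition $r(t_j)=y_j$ for $j\in[\ell]$ forces $t_j$ to be a regular point of $r$, hence $Q(t_j)\ne 0$ and $P(t_j)=y_jQ(t_j)$ for every $j\in[\ell]$; this is the step that lets the interpolation constraints be carried automatically by the polynomial $A$.

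Next I would simply display the weights: put $\beta_j:=Q(t_j)/w_j$ for $j=1,\dots,n+1$ and $\alpha_j:=P(t_j)/w_j$ for $j=\ell+1,\dots,n+1$. The Lagrange identity gives $B=\sum_{j=1}^{n+1}\tfrac{Q(t_j)}{w_j}\,\omega_j=Q$, and, using $P(t_j)=y_jQ(t_j)$ on $[\ell]$, $A=\sum_{j=1}^{\ell}\tfrac{y_jQ(t_j)}{w_j}\,\omega_j+\sum_{j=\ell+1}^{n+1}\tfrac{P(t_j)}{w_j}\,\omega_j=\sum_{j=1}^{n+1}\tfrac{P(t_j)}{w_j}\,\omega_j=P$. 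Hence the function \eqref{eq:baryform} with these weights equals $A/B=P/Q=r$. For the nondegeneracy requirement that some $\beta_j\ne0$: were all $\beta_j=0$, then $Q$ would vanish at the $n+1$ distinct points $t_1,\dots,t_{n+1}$ while $\deg Q\le n$, forcing $Q\equiv 0$, a contradiction. Finally, if $t_j$ with $\ell+1\le j\le n+1$ is a pole of $r$, then $Q(t_j)=0$ (the representation being reduced), so $\beta_j=Q(t_j)/w_j=0$, as claimed.

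I do not expect a genuine obstacle; the care points are (i) verifying that passing to lowest terms keeps $P,Q\in\bbP_n$, which is precisely where $r\in\scrR_{(n,n)}$ enters; (ii) the observation that a finite value $y_j$ rules out $t_j$ being a pole, so the interpolation data are inherited by $A$ without further work; and (iii) the pigeonhole step turning ``$B\not\equiv0$'' into ``at least one $\beta_j\ne0$'' via the count $n+1>\deg B$. The known cases $\ell=0$ (Theorem \ref{thm:barycentric}) and $\ell=n+1$ (\cite[Lemma 2.1]{bemi:1997b}) are recovered as the extremes, with the respective families $\{\alpha_j\}$ or $\{\beta_jy_j\}$ of summands being empty or exhausting the numerator.
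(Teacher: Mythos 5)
Your proof is correct, and it is essentially the argument the paper invokes: the paper simply defers to \cite[Lemma 2.1]{bemi:1997b} (the $\ell=n+1$ case) and Theorem \ref{thm:barycentric} (the $\ell=0$ case), whose underlying mechanism is exactly your construction---clear denominators by the node polynomial $\omega$, represent the reduced numerator and denominator $P,Q\in\bbP_n$ via Lagrange interpolation at the support points, and read off $\beta_j=Q(t_j)/w_j$, $\alpha_j=P(t_j)/w_j$, with the pole statement following from $Q(t_j)=0$ in lowest terms. You have merely written out in full the details the paper leaves to the citations, including the correct observations that finiteness of $y_j$ forces $Q(t_j)\ne0$ and that not all $\beta_j$ can vanish.
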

\begin{proof}
The proof is the same as that of \cite[Lemma 2.1]{bemi:1997b} in which $\ell=n+1$. Also, Theorem \ref{thm:barycentric} corresponds to $\ell=0$. The fact that $t_j$ is a pole of $r$  then  $\beta_j=0$ has already been given in \cite{bemi:1997} for the classical rational interpolation with $\ell=n+1$. 
\end{proof}

For the general barycentric formula \eqref{eq:baryform} satisfying the $1\le \ell\le n+1$ interpolation conditions $\xi(t_j)=y_j\in\bbC$ for $1\le j\le \ell$, we consider to compute a rational $\xi(x)$ that achieves the minimax approximation  (or, equivalently, parameters  $\{\alpha_j\}_{j=\ell+1}^{n+1}$ and  $\{\beta_j\}_{j=1}^{n+1}$ with $\beta_j\ne 0~(1\le j\le \ell)$ in \eqref{eq:baryform})
\begin{equation}\label{eq:baryInterpMinmax}
\inf_{\begin{subarray}{c} \xi\in \scrR_{(n)}\\
            \xi(t_j)=y_j, ~1\le j\le \ell\end{subarray}
            }\|\xi(\bx)-\Bf\|_\infty,~~{\rm where}~~\|\xi(\bx)-\Bf\|_\infty=\max_{1\le j\le m}|\xi(x_j)-f_j|
\end{equation}
over sample data $\{(x_j,f_j)\}_{j=1}^m~(m\ge 2n+2-\ell)$. Let  ${\cal X}:=\{x_j\}_{j=1}^{m}$ hereafter. In this setting, we assume, without loss of generalization, that ${\cal X}\cap {\cal T}=\emptyset$.  

Whenever the infimum of \eqref{eq:baryInterpMinmax} is attainable by the optimal parameters $\{\alpha_j^*\}_{j=\ell+1}^{n+1}$ and  $\{\beta_j^*\}_{j=1}^{n+1}$, we shall relate them with the numerator $p^*$ and the denomimator $q^*$ {\it via} 
\begin{equation}\label{eq:optimalpq}
p^*(x)=\sum_{j=1}^\ell\frac{\beta_j^*y_j}{x-t_j}+\sum_{j=\ell+1}^{n+1}\frac{\alpha_j^*}{x-t_j},~~q^*(x)=\sum_{j=1}^\ell\frac{\beta_j^*}{x-t_j}+\sum_{j=\ell+1}^{n+1}\frac{\beta_j^*}{x-t_j}.
\end{equation}

\begin{remark}\label{rkm:baryInterpMinmax}
We make a remark on two special cases with $\ell=0$ and $\ell=n+1$.
\item[i)]  The minimax approximation \eqref{eq:baryInterpMinmax} with $\ell=0$ is just the classical discrete rational minimax approximation:
\begin{equation}\label{eq:bestf}
 \inf_{\xi \in\scrR_{(n)}}\|\Bf-\xi(\bx)\|_{\infty},
\end{equation}
for which extensive discussions have been made  traditionally in e.g., \cite{bapr:1972,chlo:1963,elli:1978,guse:1999,gutk:1983,loeb:1957,rutt:1985,sava:1977,sako:1963,this:1993,will:1972,will:1979,wulb:1980} and     recently  in e.g., \cite{begu:2017,drnt:2024,fint:2018,gogu:2021,hoka:2020,nase:2018,natr:2020,yazz:2023,zhha:2025,zhyy:2025}. 

\item[ii)] For another special case with $\ell=n+1$, the problem \eqref{eq:baryInterpMinmax} reduces to finding $\{\beta_j\}_{j=1}^{n+1}$ in 
$\xi(x)=  {\sum_{j=1}^{n+1}\frac{\beta_jy_j}{x-t_j}}\big/{\sum_{j=1}^{n+1}\frac{\beta_j}{x-t_j}}$
 to achieve the minimax approximation over ${\cal X}$. This is related with the rational interpolation  \cite{bemi:1997b,betr:2004,wuyt:1974} in the literature. In this situation, the minimax approximation \eqref{eq:baryInterpMinmax} offers a way to determine good weights $\beta_j$ using data $\{(x_j,f_j)\}_{j=1}^m$ for the rational interpolation. Further discussions on the topic of the rational interpolation can be found in  e.g., \cite{bekl:2014,flho:2007,gukl:2012,ioni:2013,scwe:1986}.
\end{remark}

\section{The dual problem}\label{sec:dual}
 
With the parameterization $\xi(x)$ given in \eqref{eq:baryform}, we now consider the minimax approximation \eqref{eq:baryInterpMinmax}. Similar to the treatment for the classical polynomial \cite{yazz:2023} and rational minimax approximations \cite{zhha:2025,zhyy:2025}, {we introduce a real variable $
\eta= \max_{1\le j\le m}|\xi(x_j)-f_j|^2
$
which implies then that $ |\xi(x_j)-f_j|^2\le \eta,~ (1\le j\le m)$. 
Thus, we can   formulate the minimax approximation \eqref{eq:baryInterpMinmax} as a single-level minimization
\begin{equation}\label{eq:linearminmax}
\inf_{\eta, \{\alpha_j\},\{\beta_j\}} \eta, ~~s.t., |\xi(x_j)-f_j|^2\le \eta,~ (1\le j\le m).
\end{equation}
\cite[Theorem 2.4]{zhha:2025} establishes the precise relationship between the formulations \eqref{eq:linearminmax} and \eqref{eq:baryInterpMinmax} when $\ell=0$.}
For any barycentric representation $\xi(x)$ of \eqref{eq:baryform} with $q(x_j)\ne 0 ~(1\le j\le m)$,  reformulate the constraints $|\xi(x_j)-f_j|^2\le \eta ~ (1\le j\le m)$ as
$$
|p(x_j)-f_jq(x_j)|^2 \le \eta  |q(x_j)|^2,~  1\le j\le m,
$$ 
leading to an alternative form of \eqref{eq:linearminmax}:
\begin{equation}\label{eq:linearminmax2}
\inf_{\eta, \{\alpha_j\}, \{\beta_j\}} \eta, ~~s.t., |p(x_j)-f_jq(x_j)|^2\le \eta |q(x_j)|^2,~ 1\le j\le m.
\end{equation}
Applying Lagrange duality theory (see e.g., \cite{boyd:2004,nowr:2006}), one then can define the Lagrange dual function as 
\begin{equation}\label{eq:rat-d}
d(\bw)=\inf_{\begin{subarray}{c} \{\alpha_j\},~\{\beta_j\}\\
            \sum_{j=1}^m w_j |q(x_j)|^2=1\end{subarray}}\sum_{j=1}^m w_j |f_j q(x_j)-p(x_j)|^2,
\end{equation}  
where 
\begin{equation}\label{eq:S}
\bw\in {\cal S}:=\{\bw=[w_1,\dots,w_m]^{\T}\in \bbR^m: \bw\ge 0 ~{\rm and } ~\sum_{j=1}^m w_j=1\}.
\end{equation}
Consequently, the associated dual problem reads as 
\begin{equation}\label{eq:rat-dual}
    \max_{\bw\in {\cal S}}d(\bw).
\end{equation}

To facilitate the computation and simplify the dual function $d(\bw)$, for the numerator $p(x)$ and the denominator $q(x)$, we introduce the basis matrices 
\begin{subequations}\label{eq:Cpq}
\begin{align}\label{eq:Cp}
C_p&=\left[\begin{array}{ccc|ccc}\frac{y_1}{x_1-t_1} & \cdots & \frac{y_\ell}{x_1-t_{\ell}} &\frac{1}{x_1-t_{\ell+1}}  &\dots&\frac{1}{x_1-t_{n+1}} \\  \vdots&\cdots &\vdots & \vdots& \cdots &\vdots \\\frac{y_1}{x_m-t_1} & \cdots & \frac{y_\ell}{x_m-t_{\ell}} &\frac{1}{x_m-t_{\ell+1}}  &\dots&\frac{1}{x_m-t_{n+1}}\end{array}\right]=:[C_{p,1}, C_{p,2}]\in\bbC^{m\times(n+1)},\\\label{eq:Cq}
C_q&=\left[\begin{array}{ccc|ccc}\frac{1}{x_1-t_1} & \cdots & \frac{1}{x_1-t_{\ell}} &\frac{1}{x_1-t_{\ell+1}}  &\dots&\frac{1}{x_1-t_{n+1}} \\  \vdots&\cdots &\vdots & \vdots& \cdots &\vdots \\\frac{1}{x_m-t_1} & \cdots & \frac{1}{x_m-t_{\ell}} &\frac{1}{x_m-t_{\ell+1}}  &\dots&\frac{1}{x_m-t_{n+1}}\end{array}\right]=:[C_{q,1},C_{q,2}]\in\bbC^{m\times(n+1)}.
\end{align} 
\end{subequations} 
Indeed, we have $C_{p,1}=C_{q,1}\cdot \diag(y_1,\dots,y_\ell)$ and $C_{p,2}=C_{q,2}$; also  the coefficient vector $ \left[\begin{array}{c}\ba \\\bb\end{array}\right]\in \bbC^{2(n+1)}$ where 
\begin{equation}\label{eq:coeffab}
\ba=[\alpha_1,\dots,\alpha_{\ell},\alpha_{\ell+1},\dots,\alpha_{n+1}]^{\T} ~\mbox{and}~\bb=[\beta_1,\dots,\beta_{\ell},\beta_{\ell+1},\dots,\beta_{n+1}]^{\T}.
\end{equation}
It should be noticed from the denominator of \eqref{eq:baryform} that 
\begin{equation}\label{eq:c}
\alpha_j=\beta_j,~~\forall 1\le j\le \ell \Longleftrightarrow \left[\begin{array}{cccc}I_\ell &0_{\ell\times (n+1-\ell)}    &-I_{\ell}&0_{\ell\times (n+1-\ell)}  \end{array}\right]  \left[\begin{array}{c}\ba \\\bb\end{array}\right]=\bzs.
\end{equation}
Since  $\left[\begin{array}{c}\ba \\\bb\end{array}\right]$ satisfying   \eqref{eq:c} can be parametrized as 
\begin{equation}\label{eq:c2}
\left[\begin{array}{c}\ba \\\bb\end{array}\right] = \left[\begin{array}{ccc}I_{\ell} &   &   \\  &I_{n+1-\ell} &  \\I_{\ell} &   &   \\  & &I_{n+1-\ell} 
\end{array}\right]\bc=:N\bc,~N\in \bbR^{ 2(n+1)\times (2n+2-\ell) }, ~\bc\in \bbC^{2(n+1)-\ell}, 
\end{equation}
we can rewrite the   dual function $d(\bw)$ of \eqref{eq:rat-d} as 
\begin{equation}\label{eq:dual}
d(\bw)=\min_{\begin{subarray}{c} \bc\in \bbC^{2(n+1)-\ell}\\\left\|\sqrt{W}[0_{m\times (n+1)},C_q]N\bc\right\|_2=1
\end{subarray}}\left\|\sqrt{W}[C_p,-FC_q]N\bc\right\|_2^2,
\end{equation}
where $F=\diag(\Bf)$ and $W=\diag(\bw)$. 
By the formulations of the objective function as well as the constraints    of \eqref{eq:rat-d}, we know that the infimum in \eqref{eq:rat-d} is attainable according to  \cite[Theorem 4.1]{lilb:2013}, and thus we have expressed the computation of the dual function $d(\bw)$ as the minimum of \eqref{eq:dual}.

We put a remark on the matrix $N$ in \eqref{eq:dual}. Indeed, $N$  is a basis of the null space: $${\rm Ker}\left(\left[\begin{array}{cccc}I_\ell &0_{\ell\times (n+1-\ell)}    &-I_{\ell}&0_{\ell\times (n+1-\ell)}  \end{array}\right]\right),$$  
and for any $$A=[A_1,A_2,A_3,A_4]\in\bbC^{m\times 2(n+1)},~A_{1},A_3\in \bbC^{m\times \ell},~A_{2},A_4\in \bbC^{m\times (n+1-\ell)},$$
it follows 
$AN=[A_1+A_3,A_2,A_4]\in \bbC^{m\times (2n+2-\ell)}.$

\section{Computation of the dual function}\label{subsec:dualfun}
Before discussing duality between   \eqref{eq:baryInterpMinmax} and the dual problem \eqref{eq:rat-dual}, we first consider the computation of  the minimization \eqref{eq:dual} associated with the dual function $d(\bw)$.  
The following is the optimality condition for \eqref{eq:dual}.

\begin{lemma}\label{lem:KKT}
Given $\{(x_j,f_j)\}_{j=1}^m~(m\ge 2n+2-\ell)$, for  any $ \bw\in {\cal S}$, let $d(\bw)$ be the minimum of  \eqref{eq:dual} and  $\bc(\bw)$ be the associated solution for the minimization.  Denote $A=\sqrt{W}[C_p,-FC_q]N$ and $B=\sqrt{W}[0_{m\times (n+1)},C_q]N$, where $F=\diag(\Bf)\in \bbC^{m\times m}$, $W=\diag(\bw)\in \bbR^{m\times m}$,   $C_p,C_q$ are defined in \eqref{eq:Cpq} and  $N$ is given in \eqref{eq:c2}. Then  $(d(\bw),\bc(\bw))$ is the eigenpair associated with the smallest eigenvalue of the generalized eigenvalue problem $A^{\HH}A\bc(\bw)=d(\bw)B^{\HH}B\bc(\bw)$ subject to the normalized condition $\|B\bc(\bw)\|_2=1$. Thus, $A^{\HH}A-d(\bw)B^{\HH}B$ is positive semidefinite. 
\end{lemma}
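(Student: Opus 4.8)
The plan is to read \eqref{eq:dual} as the minimization of the Hermitian quadratic form $\bc\mapsto\bc^{\HH}(A^{\HH}A)\bc=\|A\bc\|_2^2$ over the quadratic constraint surface $\{\bc\in\bbC^{2n+2-\ell}:\bc^{\HH}(B^{\HH}B)\bc=\|B\bc\|_2^2=1\}$, and to extract both assertions of the lemma from first-order optimality together with the defining minimality of $d(\bw)$. Throughout I take the existence of the minimizer $\bc(\bw)$ as given (it is built into the statement and was justified via \cite[Theorem 4.1]{lilb:2013}), and note that $d(\bw)=\|A\bc(\bw)\|_2^2\ge 0$ and $\|B\bc(\bw)\|_2=1$.

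First I would derive the generalized eigenvalue equation from Lagrange multipliers. Identifying $\bbC^{2n+2-\ell}$ with a real Euclidean space, the two maps $\bc\mapsto\|A\bc\|_2^2$ and $\bc\mapsto\|B\bc\|_2^2$ are quadratic (hence smooth) real-valued functions, and at $\bc(\bw)$ the constraint gradient is, in the appropriate real coordinates, proportional to $B^{\HH}B\,\bc(\bw)$, which is nonzero because $\bc(\bw)^{\HH}B^{\HH}B\,\bc(\bw)=\|B\bc(\bw)\|_2^2=1>0$ already forces $B^{\HH}B\,\bc(\bw)\ne\bzs$. Hence the constraint qualification for a single equality constraint holds, and there is a multiplier $\mu\in\bbR$ with $A^{\HH}A\,\bc(\bw)=\mu\,B^{\HH}B\,\bc(\bw)$. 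Left-multiplying by $\bc(\bw)^{\HH}$ and using the normalization gives $\mu=\bc(\bw)^{\HH}A^{\HH}A\,\bc(\bw)=\|A\bc(\bw)\|_2^2=d(\bw)$, so $(d(\bw),\bc(\bw))$ indeed solves $A^{\HH}A\,\bc(\bw)=d(\bw)B^{\HH}B\,\bc(\bw)$ under $\|B\bc(\bw)\|_2=1$.

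Next I would establish $A^{\HH}A-d(\bw)B^{\HH}B\succeq 0$ directly from minimality; this simultaneously explains why $d(\bw)$ deserves to be called the smallest relevant generalized eigenvalue. For any $\bc$ with $B\bc\ne\bzs$, the scaled vector $\bc/\|B\bc\|_2$ is feasible for \eqref{eq:dual}, so $\|A\bc\|_2^2/\|B\bc\|_2^2\ge d(\bw)$, i.e. $\bc^{\HH}A^{\HH}A\,\bc\ge d(\bw)\,\bc^{\HH}B^{\HH}B\,\bc$; for $\bc$ with $B\bc=\bzs$ the left side equals $\|A\bc\|_2^2\ge 0$ while the right side is $0$, so the same inequality persists. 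Therefore $\bc^{\HH}\bigl(A^{\HH}A-d(\bw)B^{\HH}B\bigr)\bc\ge 0$ for every $\bc$, which (with Hermiticity being obvious) is the claimed positive semidefiniteness. Moreover, for any generalized eigenpair $(\lambda,\bv)$ with $B\bv\ne\bzs$ one reads off $\lambda\|B\bv\|_2^2=\bv^{\HH}A^{\HH}A\,\bv\ge d(\bw)\|B\bv\|_2^2$, whence $\lambda\ge d(\bw)$.

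I expect the only point needing genuine care — and the natural place for a referee to push back — to be the differentiation step in the complex setting: one must verify that Lagrange multipliers are legitimately available, namely that $B^{\HH}B\,\bc(\bw)\ne\bzs$ (used above) and that the real-coordinate (or Wirtinger) computation of the gradients is carried out correctly. Everything else reduces to the bookkeeping of Hermitian quadratic forms and the elementary scaling argument; attainability of the minimum, which could otherwise be an obstacle, is already guaranteed by \cite[Theorem 4.1]{lilb:2013}.
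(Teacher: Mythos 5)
Your proof is correct, but it takes a more self-contained route than the paper does. The paper's proof of Lemma~\ref{lem:KKT} is essentially a one-line reduction: it observes that \eqref{eq:dual} is the minimization of the Hermitian quadratic form $\bc^{\HH}A^{\HH}A\bc$ subject to the quadratic constraint $\bc^{\HH}B^{\HH}B\bc=1$ and then invokes the standard result (\cite[Theorem 8.7.1]{govl:2013}, or the trace-minimization principles for positive semidefinite pencils in \cite{lilb:2013}) which delivers both the generalized eigenvalue characterization and the semidefiniteness of $A^{\HH}A-d(\bw)B^{\HH}B$, with attainability supplied by \cite[Theorem 4.1]{lilb:2013}. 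You instead prove the statement from scratch: the Lagrange multiplier argument (with the constraint qualification correctly checked via $B^{\HH}B\,\bc(\bw)\ne\bzs$, which follows from $\|B\bc(\bw)\|_2=1$, and with the multiplier identified as $d(\bw)$ by pairing with $\bc(\bw)$) gives the eigen-equation, and the scaling argument — treating the cases $B\bc\ne\bzs$ and $B\bc=\bzs$ separately — gives $\bc^{\HH}(A^{\HH}A-d(\bw)B^{\HH}B)\bc\ge 0$ for all $\bc$ and hence also $\lambda\ge d(\bw)$ for every eigenpair with $B\bv\ne\bzs$, i.e. minimality of $d(\bw)$ among the eigenvalues compatible with the normalization. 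What your route buys is transparency and independence from the cited pencil theorems (important here since $B$ need not have full column rank when some $w_j$ vanish, and your case split handles that explicitly); what the paper's route buys is brevity and the fact that the cited results also settle attainability, which you must still import from \cite[Theorem 4.1]{lilb:2013}, as you acknowledge.
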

\begin{proof}
This is from \cite[Theorem 8.7.1]{govl:2013} or \cite{lilb:2013}  because \eqref{eq:dual} can be equivalent to minimizing 
the quadratic form $\bc^{\HH} A^{\HH}A\bc$ subject to the quadratic constraint $\bc^{\HH} B^{\HH}B\bc=1$, for which the minimum is attainable due to  \cite[Theorem 4.1]{lilb:2013}.
\end{proof}
 
In the implementation, we provide a computational efficient way to solve the minimization \eqref{eq:dual}.
 Split  the basis matrices in \eqref{eq:Cpq} as
\begin{equation}\label{eq:splitCpq}
C_p=[C_{p,1}, C_{p,2}],~C_q=[C_{q,1}, C_{q,2}], ~~C_{p,1}, C_{q,1}\in \bbC^{m\times \ell}, ~C_{p,2}=C_{q,2}\in \bbC^{m\times (n+1-\ell)}.
\end{equation}
 Let 
 \begin{equation}\label{eq:QRWCq}
 \sqrt{W}C_q=Q_qR_q,~{\rm and}~\sqrt{W}C_{p,2}=Q_{p,2}R_{p,2}
 \end{equation} 
be the thin QR decompositions, where $Q_q\in \bbC^{m\times (n+1)}$, $R_q\in \bbC^{(n+1)\times (n+1)}$, $Q_{p,2}\in \bbC^{m\times (n+1-\ell)}$, $R_{p,2}\in \bbC^{(n+1-\ell)\times (n+1-\ell)}$. Then we have the following proposition.

\begin{proposition}\label{prop:dual_GEP}
{Given $\{(x_j,f_j)\}_{j=1}^m~(m\ge 2n+2-\ell)$, for  any $ \bw\in {\cal S}$ with at least $n+1$ nonzero $w_j$'s, let $d(\bw)$ be the minimum of  \eqref{eq:dual} and  
\begin{equation}\nonumber 
\bc(\bw)=  \left[\begin{array}{c} \bc_1(\bw) \\ \bc_2(\bw)\\ \bc_3(\bw)\end{array}\right] \in \bbC^{2(n+1)-\ell},~ \bc_1(\bw)\in \bbC^{\ell},~ \bc_2(\bw)\in \bbC^{n+1-\ell},~ \bc_3(\bw)\in \bbC^{n+1-\ell},
\end{equation}
be the associated solution. Then, with the notations in \eqref{eq:splitCpq} and \eqref{eq:QRWCq}, $\sqrt{d(\bw)}$ is the smallest singular value of 
\begin{equation}\label{eq:optc1}
(I_{m}-Q_{p,2}Q_{p,2}^{\HH})\sqrt{W}[C_{p,1}-FC_{q,1},-FC_{q,2}]R_q^{-1}
\end{equation}
with the corresponding right singular vector 
$R_q\left[\begin{array}{c}\bc_1(\bw) \\\bc_3(\bw)\end{array}\right]$, and 
\begin{equation}\label{eq:c2w}
\bc_2(\bw)=-R_{p,2}^{-1}Q_{p,2}^{\HH}\sqrt{W}[C_{p,1}-FC_{q,1},-FC_{q,2}]\left[\begin{array}{c}\bc_1(\bw) \\\bc_3(\bw)\end{array}\right].
\end{equation} 
}
\end{proposition}
\begin{proof} Based on Lemma \ref{lem:KKT}, we only need to provide a detailed formulation for this eigenvalue problem $A^{\HH}A\bc(\bw)=d(\bw)B^{\HH}B\bc(\bw)$ with  $\|B\bc(\bw)\|_2=1$.  
To simplify   presentation, we introduce a permutation matrix 
\[
{P = \begin{bmatrix}
I_\ell & \\
&& I_{n+1-\ell} \\
&  I_{n+1-\ell} &
\end{bmatrix} \in \mathbb{R}^{(2n+2-\ell) \times (2n+2-\ell)}.}
\]
and  write  
\begin{align}\nonumber
\sqrt{W}[0_{m\times (n+1)},C_q]N\bc=\sqrt{W}[0,C_q]NPP^{\T}\bc=\sqrt{W}[C_{q}, 0_{m\times (n+1-\ell)}]  \what\bc,
\end{align}
where $\what \bc = P^{\T}\bc=[\bc_1(\bw)^{\T},\bc_3(\bw)^{\T},\bc_2(\bw)^{\T}]^{\T}.$ Similarly, 
\begin{equation}\nonumber
\sqrt{W}[C_p,-FC_q]N\bc=\sqrt{W}[C_{p,1}-FC_{q,1}, -FC_{q,2},C_{p,2}]\what\bc.
\end{equation}
Now, by \cite[Theorem 8.7.1]{govl:2013} or \cite{lilb:2013} again, the solution $\what \bc(\bw)$ can be solved from 
\begin{align*}
\left\{\begin{array}{c}\what A_{1}^{\HH}W\what A_{1}\what \bc_1+\what A_{1}^{\HH}W\what A_{2}\what \bc_2=d(\bw)C_{q}^{\HH}WC_{q}\what \bc_1\\\what A_{2}^{\HH}W\what A_{1}\what \bc_1+\what A_{2}^{\HH}W\what A_{2}\what \bc_2=0,
\end{array}\right.
\end{align*}
where $\what A_1=[C_{p,1}-FC_{q,1}, -FC_{q,2}]\in \bbC^{m\times (n+1)}$, $\what A_2= C_{p,2}\in \bbC^{m\times (n+1-\ell)},$ $\what \bc_1=\left[\begin{array}{c}\bc_1(\bw) \\\bc_3(\bw)\end{array}\right]$ and $\what \bc_2=\bc_2(\bw).$ Using the QR decompositions of $\sqrt{W}C_q=Q_qR_q$ and $\sqrt{W}C_{p,2}=Q_{p,2}R_{p,2}$,  and substituting $\what \bc_2=-(\what A_{2}^{\HH}W\what A_{2})^{-1}\what A_{2}^{\HH}W\what A_{1}\what \bc_1$  into the first equation, we get
\begin{equation}\nonumber
\left(\sqrt{W}\what A_1\right)^{\HH} (I_m-Q_{p,2}{Q_{p,2}}^{\HH})\sqrt{W}\what A_1\what \bc_1=\left(\sqrt{W}\what A_1\right)^{\HH} (I_m-Q_{p,2}{Q_{p,2}}^{\HH})^2\sqrt{W}\what A_1\what \bc_1=d(\bw) R_q^{\HH}R_q\what \bc_1.
\end{equation}
It then can be seen that $\sqrt{d(\bw)}$ is the smallest singular value of  \eqref{eq:optc1} and $R_q \what \bc_1$ is the associated right singular vector. The result \eqref{eq:c2w} also follows. 
\end{proof}

\begin{remark}\label{rkm:twocases}
Two special cases are worth mentioning. 
\begin{itemize}
\item[1)] The traditional discrete rational minimax approximation corresponds to $\ell=0$. In this case, $\bc_2(\bw)$ is null and  
$
\bc(\bw)=  \left[\begin{array}{c} \bc_1(\bw) \\ \bc_3(\bw)\end{array}\right] \in \bbC^{2(n+1)},~ \bc_1(\bw)\in \bbC^{n+1},~ \bc_3(\bw)\in \bbC^{n+1};
$
$\sqrt{d(\bw)}$ is the smallest singular value of 
\begin{equation}\nonumber
(I_{m}-Q_{p}Q_{p}^{\HH})F\sqrt{W}C_{q}R_q^{-1}=(I_{m}-Q_{p}Q_{p}^{\HH})FQ_{q},
\end{equation}
with the right singular vector $R_q\bc(\bw)$, where $\sqrt{W}C_q=Q_{q}R_q$ is the thin QR decomposition. 
\item[2)] For the barycentric formula  $\xi(x)= {\sum_{j=1}^{n+1}\frac{\beta_j y_j}{x-  t_j}}\bigg/{\sum_{j=1}^{n+1}\frac{\beta_j}{x-t_j}}$ associated with the traditional linear rational interpolation \cite{bemi:1997b,bekl:2014,scwe:1986}, i.e., $\ell=n+1$, we know $Q_{p,2}$,  
$\bc_2(\bw)$ and $\bc_3(\bw)$ are null,  and  $\bc(\bw)=\bc_1(\bw)$. In this case, $R_q\bc(\bw)$ is the right singular vector of 
$\sqrt{W}(C_{p}-FC_{q})R_q^{-1}$
associated with its smallest singular value $\sqrt{d(\bw)}$. 
\end{itemize}
\end{remark}
\section{Duality}\label{subsec:duality}
We now develop the dual theory of the minimax approximation \eqref{eq:baryInterpMinmax}. We remark that, relying on the barycentric parameterization of \eqref{eq:baryform}, the derivations of relevant properties of the dual problem \eqref{eq:rat-dual} are essentially similar to that for the classical polynomial \cite{yazz:2023} and rational \cite{zhyy:2025} minimax approximations. 

\subsection{Weak duality}\label{subsubsec:weakdual}

\begin{theorem}\label{thm:q-dual}
Let $\{(t_j,y_j)\}_{j=1}^\ell ~(0\le \ell\le n+1)$ be the interpolation data, and choose ${\cal \what I}=\{t_j\}_{j=\ell+1}^{n+1}\subset\bbC$ so that the support points in ${\cal T}=\{t_j\}_{j=1}^{n+1}$ are distinct for $\xi(x)$ defined in \eqref{eq:baryform}. Given  $\{(x_j,f_j)\}_{j=1}^m~(m\ge 2n+2-\ell)$ with ${\cal T}\cup \{x_j\}_{j=1}^m=\emptyset$, suppose $\xi^*=p^*/q^*\in \scrR_{(n)}$ with $q^*(x_j)\ne 0~(1\le j\le m)$ is a solution to \eqref{eq:baryInterpMinmax} with 
$
\eta_\infty=\|\Bf-\xi^*(\bx)\|_\infty.
$
Then we have the weak duality:
\begin{equation} \label{eq:weakduality}
\max_{\bw\in {\cal S}}d(\bw)\le (\eta_\infty)^2, 
\end{equation}
where ${\cal S}$  and   the dual function $d(\bw)$ are given in  \eqref{eq:S} and \eqref{eq:rat-d} (or equivalently, \eqref{eq:dual}), respectively.
\end{theorem}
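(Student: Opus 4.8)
The plan is to exhibit, for an arbitrary dual-feasible weight vector $\bw\in{\cal S}$, a candidate choice of the coefficients $\{\alpha_j\},\{\beta_j\}$ (equivalently $\bc$) in the definition \eqref{eq:dual} of $d(\bw)$ that is built directly from the primal optimizer $\xi^*=p^*/q^*$, and to show that its objective value is at most $(\eta_\infty)^2$. First I would take $\xi^*$, a solution of \eqref{eq:bestf} with $q^*(x_j)\neq 0$ for all $j\in[m]$, and invoke Theorem \ref{lem:baryrational2} to write $\xi^*$ in the barycentric form \eqref{eq:baryform} for some weights $\{\alpha_j^*\}_{j=\ell+1}^{n+1}$ and $\{\beta_j^*\}_{j=1}^{n+1}$, with the associated numerator/denominator $p^*,q^*$ as in \eqref{eq:optimalpq}. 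Note that since $\xi^*$ interpolates the data $y_j$ at $t_j$, its barycentric coefficients automatically satisfy the constraint $\alpha_j=\beta_j$ for $j\in[\ell]$, i.e.\ they correspond to some $\bc^*$ via $N\bc^*$ in \eqref{eq:c2}.

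Next, observe that $\sum_{j=1}^m w_j|q^*(x_j)|^2$ is a nonnegative real number; the key subtlety is whether it can vanish. If $\sum_{j=1}^m w_j|q^*(x_j)|^2>0$, I would rescale: set $\bc(\bw)=\bc^*/\sqrt{\sum_{j=1}^m w_j|q^*(x_j)|^2}$, which is feasible for \eqref{eq:dual} (the normalization constraint $\|\sqrt{W}[0,C_q]N\bc\|_2=1$ holds by construction, using the identity $C_qN\bc^* $ evaluated at $x_j$ equals $q^*(x_j)$). Then
\[
d(\bw)\le \frac{\sum_{j=1}^m w_j\,|f_jq^*(x_j)-p^*(x_j)|^2}{\sum_{j=1}^m w_j\,|q^*(x_j)|^2}
=\frac{\sum_{j=1}^m w_j\,|q^*(x_j)|^2\,|f_j-\xi^*(x_j)|^2}{\sum_{j=1}^m w_j\,|q^*(x_j)|^2}\le (\eta_\infty)^2,
\]
where the first inequality is just that $d(\bw)$ is the infimum over all feasible $\bc$, the equality uses $p^*(x_j)=\xi^*(x_j)q^*(x_j)$, and the last inequality is because the right-hand side is a weighted average (with nonnegative weights summing to a positive number) of the quantities $|f_j-\xi^*(x_j)|^2\le(\eta_\infty)^2$. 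Taking the maximum over $\bw\in{\cal S}$ yields \eqref{eq:weakduality}.

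The one place that needs care — and the main obstacle — is the degenerate case $\sum_{j=1}^m w_j|q^*(x_j)|^2=0$, i.e.\ $q^*(x_j)=0$ for every $j$ in the support of $\bw$. Here the rescaling above is illegal and in fact the normalization constraint in \eqref{eq:dual} is unsatisfiable by the particular $\bc^*$; I would argue that in this situation $d(\bw)$ is still bounded appropriately. One clean way is to perturb: replace $q^*$ by $q^*+\epsilon$ (adding a constant, which only changes $\beta$-type coefficients and keeps the interpolation structure, hence stays in the feasible set of \eqref{eq:dual} after renormalizing for small $\epsilon>0$ since then $\sum_j w_j|q^*(x_j)+\epsilon|^2=\epsilon^2\sum_j w_j>0$), apply the bound above to $p^*$ together with $q^*+\epsilon$, and let $\epsilon\to 0$; the numerator $\sum_j w_j|f_j(q^*(x_j)+\epsilon)-p^*(x_j)|^2$ tends to $\sum_j w_j|f_jq^*(x_j)-p^*(x_j)|^2=\sum_j w_j|q^*(x_j)|^2|f_j-\xi^*(x_j)|^2=0$ while the denominator $\epsilon^2\sum_j w_j$ also tends to $0$; to control the ratio one notes $|f_j(q^*(x_j)+\epsilon)-p^*(x_j)|=|f_j\epsilon + q^*(x_j)(f_j-\xi^*(x_j))|\le \epsilon|f_j| + |q^*(x_j)|\,\eta_\infty$, so at indices in the support of $\bw$ (where $q^*(x_j)=0$) this is exactly $\epsilon|f_j|\le\epsilon\|\Bf\|_\infty$, giving ratio $\le \|\Bf\|_\infty^2$ — a crude but finite bound; a small additional argument (or simply noting that such a degenerate $\bw$ cannot be a maximizer, since $d$ is easily seen to be larger elsewhere) closes the gap. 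Alternatively, and more cleanly, one restricts attention in the final $\max$ to $\bw$ with at least $n+1$ nonzero entries as in Proposition \ref{prop:dual_GEP}, for which $\sqrt{W}C_q$ has full column rank and hence $\sum_j w_j|q^*(x_j)|^2>0$ automatically whenever $q^*\not\equiv 0$; since the supremum of $d$ is unchanged (the dual function is upper semicontinuous and such $\bw$ are dense in the relevant face of ${\cal S}$, or one simply argues the maximizer has this property), the nondegenerate computation above suffices.
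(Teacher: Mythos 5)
Your main argument is exactly the paper's proof: write $\xi^*$ in the barycentric form via Theorem \ref{lem:baryrational2}, rescale so that $\sum_{j=1}^m w_j|q^*(x_j)|^2=1$, and bound the objective defining $d(\bw)$ by a weighted average of the quantities $|f_j-\xi^*(x_j)|^2\le(\eta_\infty)^2$; this part is correct. The lengthy treatment of the ``degenerate case'' $\sum_{j=1}^m w_j|q^*(x_j)|^2=0$ is unnecessary: the theorem assumes $q^*(x_j)\ne 0$ for every $j\in[m]$, and any $\bw\in{\cal S}$ satisfies $w_j\ge 0$ with $\sum_{j=1}^m w_j=1$, so some $w_j>0$ and the sum is automatically positive --- which is exactly why the paper simply rescales without further comment. (Incidentally, the fix you sketch there would not work as written: replacing $q^*$ by $q^*+\epsilon$ leaves the barycentric family, since a nonzero constant is not of the form $\sum_j\beta_j/(x-t_j)$; but as the case cannot occur under the stated hypotheses, nothing is lost.)
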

\begin{proof}
We  prove that 
$d(\bw)\le (\eta_\infty)^2,  \forall \bw\in {\cal S},$
and therefore, $\max_{\bw\in {\cal S}}d(\bw) \le (\eta_\infty)^2$. 

First, by Theorem \ref{lem:baryrational2}, we know that both the best approximation $\xi^*=p^*/q^*\in \scrR_{(n)}$  of \eqref{eq:bestf}, and the corresponding   $\wtd \xi=\wtd p/\wtd q$ in barycentric formula that achieves the minimum $d(\bw)$ can be parameterized by \eqref{eq:baryform}. 
As  $q^*(x_j)\ne 0$, we can always choose a scaling $\tau$ so that $p_\tau^*= \tau p^*, ~q_\tau^*= \tau q^* $ satisfying $\sum_{j=1}^m  w_j|q_\tau^*(x_j)|^2 =1$. Thus
\begin{align}\nonumber
d(\bw)&=\sum_{j=1}^m w_j |f_j \wtd q(x_j)-\wtd  p(x_j)|^2 \le\sum_{j=1}^m w_j |f_j  q_\tau^*(x_j)-  p_\tau^*(x_j)|^2
 \\\nonumber
&= \sum_{j=1}^m w_j| q_\tau^*(x_j)|^2\cdot \left|f_j-\frac{ p_\tau^*(x_j)}{ q_\tau^*(x_j)}\right|^2 
 \le \sum_{j=1}^m w_j| q_\tau^*(x_j)|^2\cdot \|\Bf-\xi^*\|_\infty^2= (\eta_\infty)^2
\end{align}
leading to the weak duality \eqref{eq:rat-dual}.
\end{proof}

\subsection{Strong duality  and complementary slackness}\label{subsubsec:strongdual}

\begin{theorem}\label{thm:strongdualityRuttan}
Let $\bw^*\in{\cal S}$ be the maximizer of the dual  {problem} \eqref{eq:rat-dual},  and  $(\wtd p,\wtd  q)$ be the associated pair represented in barycentric formula   \eqref{eq:baryform} with the corresponding  $\wtd\beta_j\ne 0 ~(1\le j\le \ell)$ that achieves the minimum  ${d}({\bw}^*)$ of \eqref{eq:rat-d}. Assume  $\wtd q(x_j)\ne 0~(1\le j\le m)$  and 
\begin{equation}\label{eq:strongdualityRuttan}
\sqrt{d(\bw^*)}= \max_{1\le j\le m}\left|f_j-\frac{\wtd p(x_j)}{\wtd q(x_j)}\right|:=e(\wtd \xi).
\end{equation}
Then strong duality
\begin{equation}\label{eq:strongdual}
 \max_{\bw\in {\cal S}} d(\bw)=(\eta_\infty)^2
\end{equation} 
 holds and $\wtd \xi=\frac{\wtd p}{\wtd q}$ is the global solution to \eqref{eq:baryInterpMinmax}; furthermore, 
 \begin{equation}\label{eq:complement}
{\rm (complementary ~slackness)}  ~w_j^*\left(\eta_\infty -|f_j-\wtd \xi(x_j)|\right)=0, \quad\forall j=1,2,\dots,m.
\end{equation} 
\end{theorem}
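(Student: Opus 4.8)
The plan is to establish the two conclusions — strong duality with optimality of $\wtd\xi$, and complementary slackness — as a chain of inequalities forced to collapse into equalities. First I would set $\eta_* := \sqrt{d(\bw^*)}$ and recall that, by weak duality (Theorem~\ref{thm:q-dual}), $d(\bw^*) = \max_{\bw\in{\cal S}} d(\bw) \le (\eta_\infty)^2$. So the whole task reduces to showing the reverse inequality $\eta_\infty \le \eta_*$, i.e. that the feasible rational function $\wtd\xi = \wtd p/\wtd q$ already attains error at most $\eta_*$ on the sample set. This is precisely where hypothesis~\eqref{eq:strongdualityRuttan} enters: it says $e(\wtd\xi) = \max_{j}|f_j - \wtd\xi(x_j)| = \eta_*$. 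Since $\wtd\xi\in\scrR_{(n)}$ satisfies the interpolation conditions $\wtd\xi(t_j)=y_j$ (the barycentric parameterization~\eqref{eq:baryform} guarantees this because $\wtd\beta_j\ne 0$ for $j\in[\ell]$, via~\eqref{eq:interpcond}), it is a feasible competitor in~\eqref{eq:baryInterpMinmax}, whence $\eta_\infty \le e(\wtd\xi) = \eta_* \le \eta_\infty$. Thus every inequality is an equality: $\eta_\infty = \eta_*$, strong duality~\eqref{eq:strongdual} holds, and $\wtd\xi$ attains the infimum, so it is a global solution of~\eqref{eq:baryInterpMinmax}.

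For complementary slackness, I would revisit the proof of weak duality applied at $\bw = \bw^*$ and with the optimal $\wtd\xi$ itself playing the role of $\xi^*$. Scaling so that $\sum_j w_j^* |\wtd q_\tau(x_j)|^2 = 1$ (possible since $\wtd q(x_j)\ne 0$ for all $j$), the computation in the proof of Theorem~\ref{thm:q-dual} reads
\begin{equation}\nonumber
d(\bw^*) = \sum_{j=1}^m w_j^*\,|\wtd q_\tau(x_j)|^2 \left|f_j - \wtd\xi(x_j)\right|^2 \le \left(\sum_{j=1}^m w_j^* |\wtd q_\tau(x_j)|^2\right)\eta_\infty^2 = \eta_\infty^2.
\end{equation}
We have just shown the outer terms are equal, so the middle inequality is an equality; since each summand satisfies $w_j^* |\wtd q_\tau(x_j)|^2 |f_j - \wtd\xi(x_j)|^2 \le w_j^* |\wtd q_\tau(x_j)|^2 \eta_\infty^2$, equality of the sums forces equality termwise. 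Hence for every $j$ with $w_j^* > 0$ (and therefore $w_j^* |\wtd q_\tau(x_j)|^2 > 0$) we must have $|f_j - \wtd\xi(x_j)| = \eta_\infty$, which is exactly~\eqref{eq:complement}.

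The main obstacle, such as it is, lies not in the inequality chasing but in checking that $\wtd\xi$ is genuinely admissible and that the scaling argument is legitimate: one must confirm that the minimizing pair $(\wtd p,\wtd q)$ produced by the dual function at $\bw^*$ can indeed be taken in the barycentric form~\eqref{eq:baryform} with $\wtd\beta_j\ne 0$ for all $j\in[\ell]$ (this is assumed in the hypothesis) so that the interpolation constraints hold, and that $\wtd q(x_j)\ne0$ lets us normalize without changing $\wtd\xi$. I would also remark that hypothesis~\eqref{eq:strongdualityRuttan} is the essential nontrivial ingredient — it is the analogue of Ruttan's sufficient condition — and without it the argument only yields weak duality; the rest is the standard ``squeeze'' that turns a sufficient condition into attainment plus complementary slackness.
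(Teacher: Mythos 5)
Your overall inequality chain is the right one, and your complementary-slackness argument coincides with the paper's: once $\eta_\infty=e(\wtd\xi)=\sqrt{d(\bw^*)}$ is known, scaling $\wtd q$ to satisfy the dual normalization, writing $d(\bw^*)=\sum_j w_j^*|\wtd q(x_j)|^2|f_j-\wtd\xi(x_j)|^2\le(\eta_\infty)^2$ and forcing termwise equality (using $\wtd q(x_j)\ne0$) is exactly what the paper does. The feasibility of $\wtd\xi$ for \eqref{eq:baryInterpMinmax} via $\wtd\beta_j\ne0$, $j\in[\ell]$, is also handled correctly.

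The gap is in the first half, in the single step $\sqrt{d(\bw^*)}\le\eta_\infty$, which you justify by citing Theorem \ref{thm:q-dual}. That theorem is stated under the hypothesis that the minimax problem possesses a solution $\xi^*=p^*/q^*$ whose barycentric representation has $q^*(x_j)\ne0$ at all sample points. In Theorem \ref{thm:strongdualityRuttan} no such attainment is assumed; on the contrary, attainment of the infimum (by $\wtd\xi$) is part of the conclusion, and the paper explicitly notes that \eqref{eq:baryInterpMinmax} may have an unachievable infimum. So, as written, your appeal to weak duality presupposes exactly the delicate point the theorem is meant to settle, and if the infimum were unattained (or attained only by a $\xi^*$ whose barycentric denominator vanishes at some $x_j$) the cited theorem simply does not apply. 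The paper avoids this by arguing by contradiction: assuming $\wtd\xi$ is not optimal, it produces (from the definition of the infimum) a feasible competitor $\what\xi=\what p/\what q$ in the form \eqref{eq:baryform} with $\what q(x_j)\ne0$ and $e(\what\xi)<e(\wtd\xi)$, and then applies the positive semidefiniteness of $A^{\HH}A-d(\bw^*)B^{\HH}B$ from Lemma \ref{lem:KKT} --- which is the weak-duality inequality at $\bw^*$ evaluated at that competitor --- to conclude $e(\what\xi)\ge e(\wtd\xi)$, a contradiction; attainment, $\eta_\infty=e(\wtd\xi)$, and strong duality then follow from \eqref{eq:weakduality}. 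Your proof can be repaired in the same spirit without the contradiction: do not quote Theorem \ref{thm:q-dual} as a black box, but apply the inequality $d(\bw^*)\le \bigl(\sum_j w_j^*|q(x_j)|^2|f_j-\xi(x_j)|^2\bigr)\big/\bigl(\sum_j w_j^*|q(x_j)|^2\bigr)\le e(\xi)^2$ directly to feasible near-minimizers $\xi$ of \eqref{eq:baryInterpMinmax} (represented via Theorem \ref{lem:baryrational2} with denominators not vanishing on the relevant samples) and let $e(\xi)\to\eta_\infty$. Without some such argument, the squeeze $\eta_\infty\le e(\wtd\xi)=\sqrt{d(\bw^*)}\le\eta_\infty$ is not justified under the stated hypotheses.
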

\begin{proof}
 We prove   strong duality \eqref{eq:strongdual} and $\wtd \xi=\frac{\wtd p}{\wtd q}$ is the global solution to \eqref{eq:baryInterpMinmax} by contradiction. Let $\eta_{\infty}$ be the infimum of \eqref{eq:baryInterpMinmax}. Suppose $\wtd \xi$ is not a global solution to \eqref{eq:baryInterpMinmax}. Then either the infimum of \eqref{eq:baryInterpMinmax} is unattainable and $e(\wtd \xi)>\eta_\infty$, or  is attainable but $\wtd \xi$ is not a solution. In either case, it must hold that $e(\wtd \xi)>\eta_{\infty}$ and we can find\footnote{When the infimum of \eqref{eq:baryInterpMinmax} is unattainable and $e(\wtd \xi)>\eta_\infty$, then by the definition of infimum \eqref{eq:baryInterpMinmax},   there is a $\what \xi$ in the barycentric form \eqref{eq:baryform} with the corresponding $\what \beta_j\ne 0~(1\le j\le \ell)$ satisfying  $\eta_{\infty}<e(\what \xi)<e(\wtd \xi)$; when  the infimum of \eqref{eq:baryInterpMinmax} is attainable but $\wtd \xi$ is not a solution, we simply take $\what \xi$ as the rational minimax approximant of \eqref{eq:baryInterpMinmax}. } a $\what \xi\in \scrR_{(n)}$ with $\what \xi=\what p/\what q$  in the barycentric formula \eqref{eq:baryform} satisfying $\what q(x_j)\ne 0 ~(1\le j\le m)$ with the corresponding $\what \beta_j\ne 0~(1\le j\le \ell)$  and
 \begin{equation}\label{eq:etawtd}
 e(\what  \xi):=\max_{1\le j\le m}|f_j-\what \xi(x_{j})|<e(\wtd \xi).
 \end{equation}
 {Let $\what \ba, \what \bb\in \bbC^{n+1}$ be in the form of \eqref{eq:coeffab}  associated coefficients vectors of $\what p$ and $\what q$ represented in the barycentric form \eqref{eq:baryform}. Let $\what \bc\in \bbC^{2n+2-\ell}$ be the corresponding vector satisfying $\left[\begin{array}{c}\what\ba \\\what\bb\end{array}\right]  =N\what\bc$ given in \eqref{eq:c2}.} By Lemma  \ref{lem:KKT}, the matrix $H_{\bw^*}=A^{\HH}A-d(\bw^*)B^{\HH}B$ is positive semidefinite. Thus 
 \begin{align}\nonumber
 0\le&~{ \what \bc^{\HH}\left(A^{\HH}A-d(\bw^*)B^{\HH}B\right) \what \bc }\\\nonumber
 =&\sum_{j=1}^m w_j^* \left( |f_{j}(x_j)\what q(x_j)-\what  p (x_j)|^2 -d(\bw^*)|\what  q(x_j)|^2\right)\\\nonumber
=&\sum_{j=1}^mw_j^*| \what  q(x_j)|^2\left( |f_{j}(x_j)-\what \xi(x_j)|^2-e(\wtd \xi)^2\right)\quad \quad {\rm (by ~\eqref{eq:strongdualityRuttan})}\\\nonumber
\le &\sum_{j=1}^mw_j^*| \what  q(x_j)|^2\left(\|\Bf-\what \xi(\bx)\|_{\infty}^2-e(\wtd \xi)^2\right).  
 \end{align}
 As $\bw\ne \bzs$ and $\what  q(x_j)\ne 0~(1\le j\le m)$, this implies   $e(\what \xi)=\|\Bf-\what \xi(\bx)\|_{\infty}\ge e(\wtd \xi)$, a contradiction against \eqref{eq:etawtd}. This proves that the infimum in  \eqref{eq:baryInterpMinmax} is attainable by $\wtd \xi(x)\in \scrR_{(n)}.$ Thereby, we have $\eta_{\infty}=e(\wtd \xi)$. Since $\eta_{\infty}$ is an upper  bound of the dual problem by weak duality \eqref{eq:weakduality},  $\eta_{\infty}=e(\wtd \xi)=\sqrt{d(\bw^*)}$ ensures strong duality \eqref{eq:strongdual}. 

To prove  the complementary slackness \eqref{eq:complement}, since   strong duality \eqref{eq:strongdual} holds and  $(\wtd p,\wtd q)$ is feasible (by a scaling for $\wtd q$ using $\bw^*$) for the minimization \eqref{eq:rat-d} at $\bw=\bw^*$, we have 
\begin{align*}
(\eta_\infty)^2=d_2(\bw^*)& = \sum_{j=1}^m w^*_j|f(x_j)\wtd q(x_j)-\wtd p(x_j)|^2 \\
&=  \sum_{j=1}^m w^*_j|\wtd q(x_j)|^2 |f(x_j)-\wtd \xi(x_j)|^2\\
&\le  (\eta_\infty)^2\sum_{j=1}^m w^*_j|\wtd q(x_j)|^2=(\eta_\infty)^2,
\end{align*}
which yields $$w^*_j|\wtd q(x_j)|^2 \left((\eta_\infty)^2-|f(x_j)-\wtd \xi(x_j)|^2\right)=0, ~\forall 1\le j\le m.$$ As $\wtd q(x_j)\ne 0 ~(1\le j\le m)$ by assumption, it follows $w_j^*(\eta_\infty -|f(x_j)-\wtd \xi(x_j)|)=0$ $(1\le j\le m).$\end{proof}

We should remind the reader that Theorem \ref{thm:strongdualityRuttan} does not claim that the minimax approximant\footnote{The problem \eqref{eq:baryInterpMinmax} may not have a solution, i.e., the infimum is unachievable.}, if exists,  of \eqref{eq:baryInterpMinmax} can always be obtained {\it via} solving the dual problem \eqref{eq:rat-dual}; it is only a sufficient condition. To see this, suppose   \eqref{eq:baryInterpMinmax} is attainable with the infimum $\eta_\infty>0$. Let $\bw^*$ be the maximizer of the dual problem \eqref{eq:rat-dual}. In some cases (for example $\Bf=\bzs$), the matrix in \eqref{eq:optc1} can be column rank-deficient and thus $d(\bw^*)=0$. This invalidates the sufficient condition \eqref{eq:strongdualityRuttan}, and therefore, the minimax approximant of \eqref{eq:baryInterpMinmax} cannot be obtained from the dual problem.

\subsection{The set of extreme points}\label{subsec:extremep}
In the classical real polynomial minimax approximation, the so-called {\it equioscillation property} \cite[Theorem 24.1]{tref:2019a} serves a necessary and sufficient condition for the minimax approximant; it also appears very useful for designing numerical methods, including the famous Remez algorithm \cite{fint:2018,patr:2009}. The equioscillation property can be extended to  the complex polynomial minimax case \cite{rish:1961} as well as to the complex rational minimax approximations \cite{will:1972}. One of important notions there is the set of {\it extreme points} (also known as   {\it reference points}). Specifically, for a given approximant $\xi\in \scrR_{(n)}$ for $\Bf$, we define the set of extreme points as 
\begin{equation}\label{eq:extremalset}
 {\cal  X}_e(\xi):=\left\{x_j\in {\cal X}:\left|f_j-\xi(x_j)\right|=\max_{x_j\in {\cal X}}\left|f_j-\xi(x_j)\right|\right\}. 
\end{equation}
In the case of traditional rational minimax approximation \eqref{eq:bestf} (i.e., $\ell=0$)  in $\scrR_{(n)}$, a result \cite[Theorem 2.5]{gutk:1983} implies that for an irreducible rational minimax solution $\xi^*=p^*/q^*~(p^*,q^*\in \bbP_n)$ of \eqref{eq:bestf}, ${\cal  X}_e(\xi^*)$ contains at least $2n+2-\upsilon(p^*,q^*)$ points, where $\upsilon(p^*,q^*)=\min(n-\deg(p^*),n-\deg(q^*))$ is the {\it defect} of $\xi^*$. For our interpolation constrained rational minimax approximation \eqref{eq:rat-d}, we next provide a lower bound for the cardinality $|{\cal X}_e(\wtd \xi)|$ of ${\cal X}_e(\wtd \xi)$.

\begin{theorem}\label{thm:extremep}
Under the assumptions of Theorem \ref{thm:strongdualityRuttan}, for $\wtd \xi=\wtd p/\wtd q$,  
the set of extreme points ${\cal X}_e(\wtd \xi)$ defined as  \eqref{eq:extremalset} contains at least $n+2-\ell$ nodes.
\end{theorem}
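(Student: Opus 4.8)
The plan is to establish the statement at the level of the dual maximizer $\bw^*$: I will show that $\bw^*$ has at least $n+2-\ell$ strictly positive entries. This suffices, because under the hypotheses of Theorem~\ref{thm:strongdualityRuttan} we have $\eta_\infty=e(\wtd\xi)=\max_{1\le i\le m}|f_i-\wtd\xi(x_i)|$ and the complementary slackness \eqref{eq:complement} holds; consequently every index $j$ with $w_j^*>0$ satisfies $|f_j-\wtd\xi(x_j)|=\eta_\infty=\max_{x_i\in{\cal X}}|f_i-\wtd\xi(x_i)|$, i.e.\ $x_j\in{\cal X}_e(\wtd\xi)$. First I would dispatch the degenerate case $\eta_\infty=0$: then $\wtd\xi$ interpolates $\Bf$ on all of ${\cal X}$, so ${\cal X}_e(\wtd\xi)={\cal X}$ has $m\ge 2n+2-\ell\ge n+2-\ell$ elements and the claim is trivial. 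Hence assume $\eta_\infty>0$, so that strong duality \eqref{eq:strongdual} gives $d(\bw^*)=(\eta_\infty)^2>0$.

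Next I would argue by contradiction. Suppose $\bw^*$ has at most $n+1-\ell$ positive entries; after relabeling the nodes, say $w_j^*>0$ exactly for $j\in[k]$ with $1\le k\le n+1-\ell$ (the lower bound $k\ge1$ follows from $\sum_j w_j^*=1$, which in particular forces $\ell\le n$). I would then exhibit a point feasible for the minimization \eqref{eq:rat-d} defining $d(\bw^*)$ whose objective value is zero. Keep the denominator unchanged, $\what q=\wtd q$ (so the interpolation-consistent weights $\wtd\beta_j$, $j\in[\ell]$, are inherited and $\what q(x_i)=\wtd q(x_i)\ne0$ for all $i$), and seek the remaining numerator weights $\{\what\alpha_j\}_{j=\ell+1}^{n+1}$ solving the $k$ linear equations
\begin{equation}\nonumber
\sum_{j=\ell+1}^{n+1}\frac{\what\alpha_j}{x_i-t_j}\;=\;f_i\,\wtd q(x_i)-\sum_{j=1}^{\ell}\frac{\wtd\beta_j y_j}{x_i-t_j},\qquad i\in[k],
\end{equation}
that is, $\what p(x_i)=f_i\,\what q(x_i)$ for every $i\in[k]$, where $\what p$ is the barycentric numerator of the form \eqref{eq:baryform} built from the weights $\wtd\beta_j$ ($j\in[\ell]$) and $\what\alpha_j$ ($\ell+1\le j\le n+1$). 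This system is solvable because its $k\times(n+1-\ell)$ coefficient matrix is a submatrix of the Cauchy matrix generated by the pairwise distinct nodes $\{x_i\}_{i\in[k]}$ and $\{t_j\}_{j=\ell+1}^{n+1}$; since $k\le n+1-\ell$, any $k$ of its columns form a nonsingular square Cauchy matrix, so the coefficient matrix has full row rank $k$ and the system is consistent.

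Finally I would rescale, setting $(\breve p,\breve q):=(\tau\what p,\tau\what q)$ with $\tau^2=1/\sum_{j\in[k]}w_j^*|\what q(x_j)|^2$; this $\tau$ is well-defined and positive since $w_j^*>0$ and $\wtd q(x_j)\ne0$ for $j\in[k]$, and it yields $\sum_j w_j^*|\breve q(x_j)|^2=1$, so $(\breve p,\breve q)$ is feasible for \eqref{eq:rat-d} at $\bw=\bw^*$. Its objective value equals $\tau^2\sum_{j=1}^{k}w_j^*|\what p(x_j)-f_j\what q(x_j)|^2=0$, using $w_j^*=0$ for $j>k$ together with the conditions just imposed on $[k]$. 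Since $d(\bw^*)$ is the minimum of a sum of nonnegative terms, this gives $d(\bw^*)=0$, contradicting $d(\bw^*)=(\eta_\infty)^2>0$. Hence $\bw^*$ has at least $n+2-\ell$ positive entries, and each corresponding $x_j$ lies in ${\cal X}_e(\wtd\xi)$, so $|{\cal X}_e(\wtd\xi)|\ge n+2-\ell$. I expect the only non-routine step to be the solvability of the interpolation system for $\{\what\alpha_j\}$: it is the place where the specific barycentric (Cauchy-matrix) structure enters, via the classical full-rank property of rectangular Cauchy matrices with distinct parameters; the rest is bookkeeping with the parameterization \eqref{eq:baryform} and the strong-duality and complementary-slackness facts already in hand.
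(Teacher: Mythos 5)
Your proposal is correct and follows essentially the same route as the paper: use complementary slackness to reduce the claim to counting positive entries of $\bw^*$, then derive a contradiction with $d(\bw^*)=(\eta_\infty)^2>0$ by keeping the denominator $\wtd q$ and choosing the free numerator weights $\{\what\alpha_j\}_{j=\ell+1}^{n+1}$ to interpolate $f_j\wtd q(x_j)$ at the $k\le n+1-\ell$ nodes with $w_j^*>0$. Your only departures are cosmetic: you justify solvability via the full row rank of the rectangular Cauchy matrix (the paper invokes linear independence of $\{1/(x-t_j)\}_{j=\ell+1}^{n+1}$), and your rescaling by $\tau$ is redundant since $\wtd q$ already satisfies $\sum_j w_j^*|\wtd q(x_j)|^2=1$.
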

\begin{proof}
Under these assumptions,  Theorem \ref{thm:strongdualityRuttan} implies that $\wtd \xi=\wtd p/\wtd q$ is a minimax approximant for \eqref{eq:rat-d} and the complementary slackness \eqref{eq:complement} holds. For the trivial case with $\eta_\infty=0$, we know that $\wtd \xi$ is a rational interpolant for the vector $\Bf$, and the conclusion is true trivially. In the following, we consider $\eta_\infty>0$. 

Note that in \eqref{eq:complement}, the condition $w_j^*>0$ implies that $x_j$ is an extreme point due to $\eta_\infty -|f_j-\wtd \xi(x_j)|=0$ and $\eta_\infty=e(\wtd \xi)$. Hence, we next prove the result by contradiction.  Suppose without loss of generality that  $w_j^*>0$ for $j=1,\dots,k$ ($k\le n+1-\ell$) and $w_j^*=0$ for $k+1\le j\le m$. The idea is to construct a new pair $(\what p, \wtd q)$ represented in barycentric formula   \eqref{eq:baryform}   satisfying
\begin{equation}\label{eq:newp}
\what p(x_j)=f_j\wtd q(x_j), ~1\le j\le k.
\end{equation}
Thus $(\what p, \wtd q)$ is feasible for \eqref{eq:rat-dual} but the corresponding objective value is
\begin{equation}\nonumber
\sum_{j=1}^mw_j^*\left|\what p(x_j)-f_j\wtd q(x_j)\right|^2=\sum_{j=1}^kw_j^*\left|\what p(x_j)-f_j\wtd q(x_j)\right|^2=0.
\end{equation}
Because $d(\bw^*)$ is the minimum of \eqref{eq:rat-dual}, it leads to $d(\bw^*)=0$, a contradiction against $d(\bw^*)=\eta_\infty^2>0$. 

To see \eqref{eq:newp}, let 
\begin{equation}\nonumber
\wtd p(x)=\sum_{j=1}^\ell\frac{\wtd \beta_j y_j}{x-t_j}+\sum_{j=\ell+1}^{n+1}\frac{\wtd \alpha_j}{x-t_j},~~\wtd q(x)=\sum_{j=1}^\ell\frac{\wtd \beta_j}{x-t_j}+\sum_{j=\ell+1}^{n+1}\frac{\wtd\beta_j}{x-t_j}.
\end{equation}
As $\{\frac{1}{x-t_j}\}_{j=\ell+1}^{n+1}$ are $n+1-\ell$ linearly independent functions, we can choose the parameters $\{\what \alpha_j\}_{j=\ell+1}^{n+1}$ to meet the $k$ (note $k\le n+1-\ell$) conditions \eqref{eq:newp} and construct 
$$
\what p(x)=\sum_{j=1}^\ell\frac{\wtd \beta_j y_j}{x-t_j}+\sum_{j=\ell+1}^{n+1}\frac{\what \alpha_j}{x-t_j}.
$$
Due to the barycentric form, the resulting $\what \xi=\what p/\wtd q$ satisfies $\what \xi(t_j)=y_j~(1\le j\le \ell)$ and the proof is complete.
\end{proof}

We remark that for $\ell=0$, the lower bound $n+2$ does not involve the defect\footnote{For the pair $(\wtd p,\wtd  q)$  represented in barycentric formula   \eqref{eq:baryform}, we can similarly define the defect as  $\upsilon(\lambda(x)\wtd p,\lambda(x)\wtd q)$ where $\lambda(x)=\prod_{j=1}^{n+1}(x-t_j)$.} $\upsilon(\wtd p,\wtd q)$ of $\wtd \xi$ and  is a less sharp bound than $2n+2-\upsilon(\wtd p,\wtd q)$ \cite[Theorem 2.5]{gutk:1983}. {A recent work \cite{zhan:2026} discusses the optimality conditions for the rational minimax approximation and bridges Ruttan's conditions \cite{rutt:1985,this:1993}  to the dual-based framework in \eqref{eq:rat-dual}.}  For the real and continuous version of \eqref{eq:baryInterpMinmax0} with $\ell>0$ where ${\cal X}=[a,b]$, \cite[Theorem 4]{lixu:1990}  shows that the lower bound of the cardinality $|{\cal X}_e(\wtd \xi)|$ of ${\cal X}_e(\wtd \xi)$ is $2n+2-\ell-\upsilon(\wtd p,\wtd q)$.   
Technically, establishing the bound $2n+2-\upsilon(\wtd p,\wtd q)$ for the complex case requires other characterizations (see e.g., \cite[Section 3]{lixu:1990} for the real case) for $\wtd \xi$, and our treatment in Theorem \ref{thm:extremep} based on strong duality seems not sufficient to obtain that bound. Nevertheless,  in our numerical results,  we observed that $|{\cal X}_e(\wtd \xi)|\ge 2n+2-\upsilon(\wtd p,\wtd q)$ for $\ell=0$.
\section{The {\sf b-d-Lawson} iteration}\label{sec:bdlawson}
In the recent work \cite{yazz:2023}  for the polynomial minimax approximation, it has been revealed that the traditional Lawson's iteration \cite{laws:1961} is an ascent iteration for solving the dual problem. Within the same dual framework, \cite{zhyy:2025} developed the dual problem and proposed a dual-based Lawson iteration, {\sf d-Lawson}, for the traditional rational minimax approximation \eqref{eq:bestf} (i.e., $\ell=0$). Convergence of {\sf d-Lawson} has also been discussed in \cite{zhha:2025}. Due to the barycentric representation in \eqref{eq:baryform}, the interpolation constrained rational minimax approximation \eqref{eq:baryInterpMinmax} admits the same structure as the traditional rational minimax approximation \eqref{eq:bestf}, and thus, {\sf d-Lawson} can be extended in parallel. This is our proposed method {\sf b-d-Lawson} presented in Algorithm \ref{alg:bdLawson}.  
 
 A recommended initial guess of $\bw$ is $\bw^{(0)}=[1,\dots,1]^{\T}/m\in \bbR^{m}$. Other than the given support points $\{t_j\}_{j=1}^\ell$, the remaining $\{t_j\}_{j=\ell+1}^{n+1}$ can be chosen in various ways. In principle, any distinct $\{t_j\}_{j=\ell+1}^{n+1}$ that are different from $x_j$ for $1\le j\le m$ and $t_j$ for $1\le j\le \ell$ can be used in the barycentric formula \eqref{eq:baryform}. Nevertheless, a proper idea is to invoke the adaptive way in the AAA method \cite{nase:2018}:   choose the first  $\{x_{I_j}\}_{j=1}^{n+1-\ell} \subseteq \{x_j\}_{j=1}^m$ support points as in AAA   \cite{nase:2018},  and then perturb them slightly to form  $\{t_j\}_{j=\ell+1}^{n+1}$. In our numerical testing, we set $t_{\ell+j}=x_{I_j}+\frac{1}{10 m}~(1\le  j\le n+1-\ell).$

The convergence of Lawson's iteration for the polynomial and rational minimax approximations is highly nontrivial. Particularly, for the polynomial case, convergence analysis of Lawson's iteration \cite{laws:1961} has been discussed (e.g., \cite{clin:1972,yazz:2023,zhha:2025}). However, for the rational case, convergence of Lawson's iteration is not well understood. A recent work \cite{zhha:2025} provides the convergence analysis for {\sf d-Lawson}. Based on the barycentric representation, the proposed  {\sf b-d-Lawson} with $\ell=0$ is a new implementation of {\sf d-Lawson} to compute type $(n,n)$ rational minimax approximants. Therefore, the convergence analysis \cite{zhha:2025} can be helpful in understanding the behavior of {\sf b-d-Lawson} in this situation. Specifically for $\ell=0$, we point out that when $d(\bw^{(k)})$ is a simple eigenvalue of the matrix pencil $(A^{\HH}A,B^{\HH}B)$ in Lemma \ref{lem:KKT}, then there is a $\rho_0>0$ so that for any $\rho\in (0,\rho_0)$, it holds that $d(\bw^{(k+1)})\ge d(\bw^{(k)})$;  
on the other hand, for a general case where $0 < \ell \leq n+1$, the convergence analysis of {\sf b-d-Lawson}  requires additional considerations and lies beyond the scope of this paper. 

\begin{algorithm}[h!!!]
\caption{A barycentric-dual-Lawson iteration ({\sf b-d-Lawson}) for \eqref{eq:baryInterpMinmax}} \label{alg:bdLawson}
\begin{algorithmic}[1]
\renewcommand{\algorithmicrequire}{\textbf{Input:}}
\renewcommand{\algorithmicensure}{\textbf{Output:}}
\REQUIRE Interpolation data $\{(t_j,y_j)\}_{j=1}^\ell$, sample data  $\{(x_j,f_j)\}_{j=1}^m ~(m\ge 2n+2-\ell)$,  a relative tolerance for strong duality $\epsilon_r>0$ and the maximum number $k_{\rm maxit}$ of iterations.
\ENSURE  The minimax rational approximation $\xi^*$ of \eqref{eq:baryInterpMinmax} represented in the barycentric formula \eqref{eq:optimalpq}.
        \smallskip

\STATE  (Support points) Choose $\{t_j\}_{j=\ell+1}^{n+1}$ to form the $n+1$ support points $\{t_j\}_{j=1}^{n+1}$; 

\STATE (Initialization) Let $k=0$ and choose $0<\bw^{(0)}\in {\cal S}$; 

\STATE (Compute the dual function) Compute  $d(\bw^{(k)})$ and the associated vector $\xi^{(k)}(\bx)=\bp^{(k)}./\bq^{(k)}$   according to  Proposition \ref{prop:dual_GEP};

\STATE (Stop rule and output)  Stop and return  $\xi^*=\xi^{(k)}$ either if $k\ge k_{\rm maxit}$ or 
\begin{equation}\nonumber
\epsilon(\bw^{(k)}):=\left|\frac{\sqrt{d(\bw^{(k)})}-e(\xi^{{(k)}})}{e(\xi^{{(k)}})}\right|<\epsilon_r,~~{\rm where}~~e(\xi^{{(k)}})=\|\Bf-\xi^{(k)}(\bx)\|_\infty;
\end{equation}

\STATE (Update weights) Update the weight vector $\bw^{(k+1)}$ according to   ($0<\rho\le 1$)
\begin{equation}\nonumber
w_j^{(k+1)}=\frac{w_j^{(k)}\left|f_j-\xi^{(k)}(x_j)\right|^{\rho}}{\sum_{i}w_i^{(k)}\left|f_i-\xi^{(k)}(x_j)\right|^{\rho}},~~\forall j,
\end{equation} 
and   goto Step 3 with $k=k+1$.
\end{algorithmic}
\end{algorithm}  

\section{Numerical experiments}\label{sec:numerical}

To report our numerical experiments on {\sf b-d-Lawson}\footnote{The MATLAB code of {\sf b-d-Lawson} is available at\\  \url{https://ww2.mathworks.cn/matlabcentral/fileexchange/180151-the-b-d-lawson-method}.}, we implement it    
 in {MATLAB R2018a and carry out numerical testing on a 13-inch {MacBook Air} with {an} M2 chip and 8Gb memory.} The unit machine roundoff {is} $\tu=2^{-52}\approx 2.2\times 10^{-16}$.  Fixing  a  maximal number $k_{\rm max}$ of Lawson's iteration and the Lawson exponent $\rho=1$, we denote it by {\sf b-d-Lawson}($k_{\rm max}$) with initial $\bw^{(0)}=\be/m$. Similarly, {{\sf d-Lawson}($k_{\rm maxit}$)} and AAA($k_{\rm maxit}$) represent   {\sf d-Lawson}\footnote{The MATLAB code of {\sf d-Lawson} is available at\\ \url{https://ww2.mathworks.cn/matlabcentral/fileexchange/167176-d-lawson-method}.} \cite{zhyy:2025} and AAA-Lawson\footnote{
In the package of  {\tt Chebfun}, AAA($k_{\rm maxit}$) can be called via {\tt aaa(f,x, 'lawson',k)}. In particular, AAA(0) represents the basic AAA algorithm \cite{nase:2018} without Lawson's iteration.} \cite{nase:2018} with the maximal   $k_{\rm max}$ Lawson's iterations, respectively.

\begin{example}\label{eg1:l=0}
 In our first illustration, we apply  {\sf b-d-Lawson}  to the basic function 
$f(x) = |x|,  x\in[ -1,1]$
 without interpolation condition, i.e., $\ell=0$.  Choosing equally spaced nodes 
$x_{j} = -1 + \frac{2j}{m-1},\; 0\leq j\leq m-1$ with $m = 20000$ and corresponding $f_j=|x_j|$, we list the maximum errors from  {\sf b-d-Lawson}($40$), {\sf d-Lawson}($40$) and AAA($40$) in Table \ref{table7_1} for various types $(n,n)$ of the rational approximants. Note that, in this case, the difference between {\sf b-d-Lawson}($40$) and {\sf d-Lawson}($40$) mainly lies in their representations for the rational functions:  {\sf b-d-Lawson} uses the barycentric formula while {\sf d-Lawson} relies on the monomial basis equipped with  the Vandermonde with Arnoldi process (V+A). The slight improvements for some types illustrate that {\sf b-d-Lawson}  is able to produce competitive  rational approximant as {\sf d-Lawson}($40$) and AAA($40$). This example demonstrates that   {\sf b-d-Lawson} provides another effective approach for the standard discrete rational minimax approximation.

\begin{table}[h!!!]
\caption{\small Maximum errors for approximating $|x|$ in $[-1,1]$}
\hskip -20mm
\begin{center}
\begin{tabular}{|c|c|c|c|}
\hline 
$(n,n)$ &     $ e_{\tt {b-d-Lawson}(40)}(\xi) $   &      
 $ e_{\tt d-Lawson(40)}(\xi) $    &    $ e_{\tt AAA-Lawson(40)}(\xi) $  \\ 
  \hline 
(4,4) &      8.5506e-03 &      8.5949e-03 &      8.5438e-03  \\ \hline
(8,8) &      7.4051e-04 &      7.4623e-04 &      7.3908e-04  \\ \hline
(12,12) &      1.3342e-04 &      1.1308e-04 &      5.4586e-04  \\ \hline
(16,16) &      1.7130e-05 &      1.8650e-05 &      2.0991e-05  \\ \hline
(20,20) &      5.8606e-06 &      3.0925e-06 &      7.4341e-06  \\ \hline
(24,24) &      3.9164e-07 &      4.4134e-07 &      4.0632e-07  \\ \hline
(28,28) &      5.1226e-08 &      6.3754e-08 &      5.2085e-08  \\ \hline
(32,32) &      6.2480e-09 &      6.9913e-09 &      7.0380e-09  \\ \hline
(36,36) &      7.3968e-10 &      8.3275e-10 &      8.2654e-10  \\ \hline
(40,40) &      1.0765e-10 &      9.2506e-11 &      1.6092e-10  \\ \hline
\end{tabular}
\end{center}
\label{table7_1}
\end{table}%
\end{example}

\begin{example} \label{eg2:lne0}
Now, we include interpolation conditions and consider \eqref{eq:baryInterpMinmax} with $\ell>0$.
For the real function 
\begin{equation}\label{eq:f2}
f(x) = \frac{1}{\sqrt{ 1 + 100(x-0.5)^2 } }  + \frac{1}{  1 + 100(x+0.5)^2 }, \quad -1\leq x\leq 1,
\end{equation}
we choose equally spaced nodes $x_{j} = -1 + \frac{2j}{m-1},\; 0\leq j\leq m-1$ with $m = 20000;$ also, we set three interpolation conditions $\{(t_j,y_j)\}_{j=1}^3$ for {\sf b-d-Lawson}($40$), where $t_1=-1, t_2=0,t_3=1$ are in the interval $[-1,1]$ and $y_j=f(t_j)$ for $j=1,2,3$. For {\sf d-Lawson}($40$) and AAA($40$), the three interpolation conditions together with $\{x_j,f(x_j)\}_{j=0}^{m-1}$ form the sample data.  

The results of the three methods for the approximation $\xi\in \scrR_{(6)}$ are plotted in Figure \ref{figure7_1}. It is clear that only  {\sf b-d-Lawson}($40$) fulfills the three interpolation conditions; also, due to these interpolation conditions in {\sf b-d-Lawson}($40$), the maximum error of  {\sf d-Lawson}($40$) and AAA(40) are smaller than that of {\sf b-d-Lawson}($40$). Moreover, it can be seen that there are $11=2 n +2-\ell$ extreme points in the error curve of {\sf b-d-Lawson}($40$), while there are $14= 2n +2$ extreme points for  {\sf d-Lawson}($40$) and AAA(40). 

To demonstrate the monotonic convergence of {\sf b-d-Lawson} and {\sf d-Lawson} with respect to the dual function values and  strong duality, we plot sequences $\{d(\bw^{(k)})\}$ and maximum errors  $\{e(\xi^{(k)})\}$  versus the iteration $k$   in the bottom two subfigures in Figure \ref{figure7_1}. Note that as $k$ increases, the duality gap $e(\xi^{(k)})-\sqrt{d(\bw^{(k)})}$ gets smaller, implying the sufficient condition \eqref{eq:strongdual} is fulfilled.

\begin{figure}[h!!!]
\begin{center}
{\includegraphics[width= 5in ]{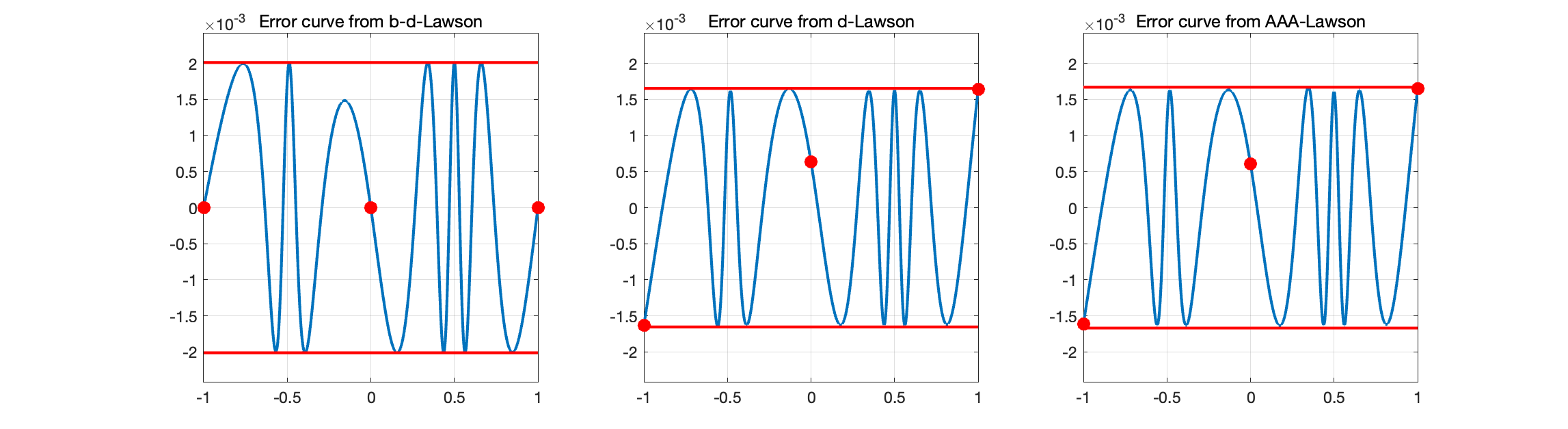} 
 \includegraphics[width= 5in, height=1.25in]{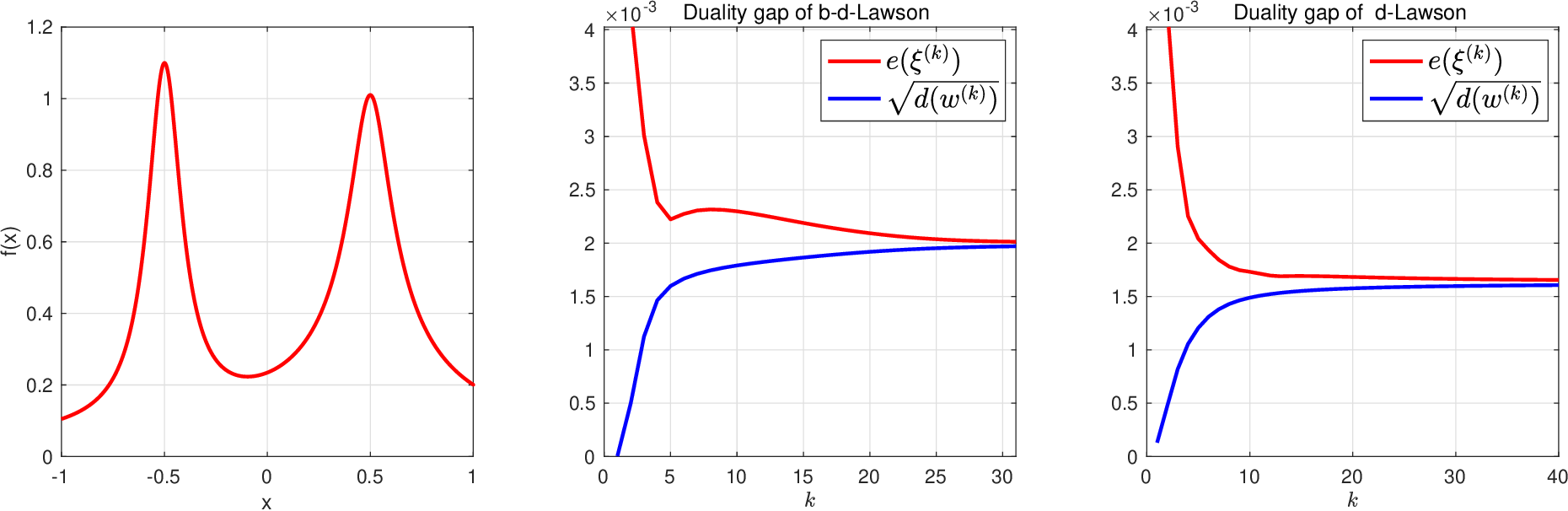}}
\caption{\small The top row: error curves from {\sf b-d-Lawson}($40$), {\sf d-Lawson}($40$) and AAA($40$) of the approximants of type  $(6,6)$, respectively.  Note that there are $11= 2n +2-\ell$ extreme points in the error curve from {\sf b-d-Lawson}($40$), while  $14= 2n +2$ extreme points for  {\sf d-Lawson}($40$) and AAA(40). The bottom row: (bottom-left) the function $f(x)$ in \eqref{eq:f2}; (bottom-middle) the dual objective function values and maximum errors versus the iteration $k$ of {\sf b-d-Lawson}; (bottom-right) the dual objective function values and maximum errors versus the iteration $k$ of {\sf d-Lawson}. }
\label{figure7_1}
\end{center}
\end{figure}
\end{example}

\begin{example} \label{eg3:lne0}
We approximate the Riemann zeta function $f(z)=\zeta(z)$ by a rational function 
using the data in the critical line $L: z = 0.5 + {\tt i}  t, t\in [-50, 50]$ where  ${\tt i}=\sqrt{-1}$. In this testing, we use  $m = 200 $ equally spaced points in $L$ as sampled data $\{x_j,f_j\}_{j=1}^m$ with $f_j=\zeta(x_j)$  evaluated by MATLAB zeta function, and impose the first 11 positive non-trivial roots $\{t_j\}_{j=1}^{11}$ of $\zeta(z)$ as the interpolation conditions, i.e., $\ell=11$.  The image of the computed approximant $\xi(z)$ from {\sf b-d-Lawson}($40$) and errors of the sampled nodes in  $L$ are plotted  in the left two subfigures in Figure \ref{figure7_2}, respectively; moreover, the phase portrait  of $\xi(z)\approx \zeta(z)$ in a striped region is presented in the right subfigure.
\begin{figure}[h!!!]
\hskip -12mm
{\includegraphics[width= 7in ]{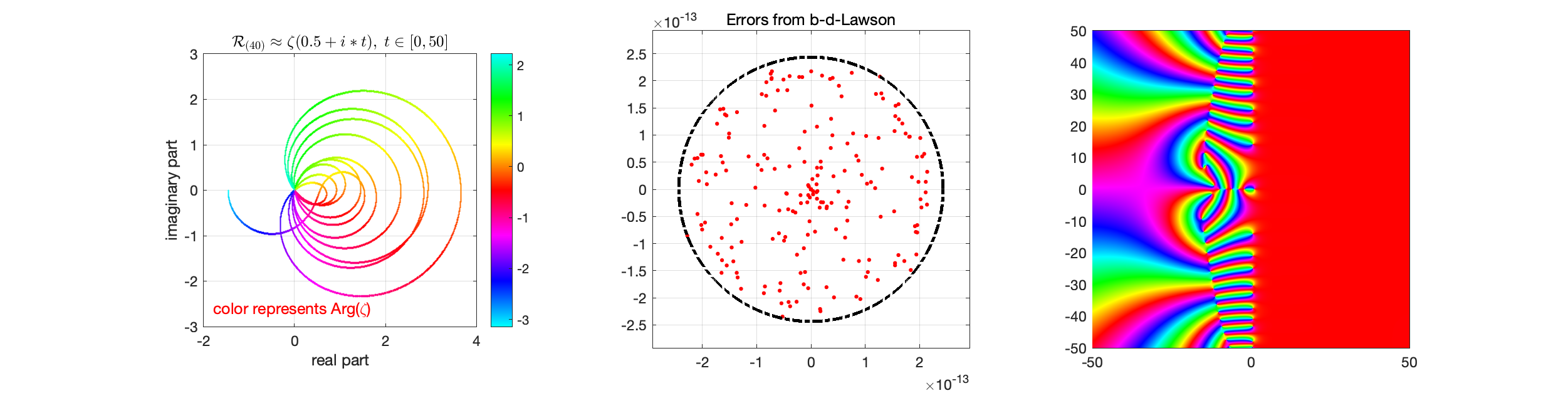} }
\caption{Approximation of the Riemann zeta function $\zeta(z)$:  (left)  the image of the computed approximant $\xi\in \scrR_{(40)}$ of the sampled nodes in $z = 0.5 + {\tt i}  t, t\in [0, 50]$;   (middle)  errors  associated with the sampled nodes in $z = 0.5 + {\tt i}  t, t\in [-50, 50]$;  (right) the phase portrait  of $\xi(z)\approx \zeta(z)$ in a striped region.}
\label{figure7_2}
\end{figure}

\end{example}

\begin{example} \label{egadd:lne0}
{
We  apply {\sf b-d-Lawson}  to solve the following 
Laplace problem \cite[Section 24]{natr:2026} 
\begin{equation}\label{eq:laplace}
\begin{cases}
\Delta u(z) = 0, & z\in\Omega\subset \bbR^2,\\
u(z) = h(z), & z\in \Gamma=\partial \Omega,
\end{cases}
\end{equation}
where $h:\bbR^2\rightarrow \bbR$ is a given real continuous boundary function. Following  \cite[Section 24]{natr:2026}, we choose the parameterized boundary curve as ($\bbC\cong \bbR^2$)
\[ \Gamma = (1+ \sin(4t)/4)
\exp({\tt i} t),\quad 0\leq t < 2\pi. \]
It is known that the solution $u$ of  \eqref{eq:laplace} is harmonic and is the real part $\Re(f(z))$ of a certain analytic function
$f(z),$ i.e., $f(z) = u(z) + {\tt i}v(z), \; z\in \Omega$, where $v$ is its conjugate harmonic function on $\Omega$. }

{In \cite[Section 24]{natr:2026}, a  AAA least-squares approach  is proposed which consists of the following steps: 
\begin{enumerate}[(1)]
\item  Approximate $h$ by a complex rational function $\xi\in \scrR_{(n)}$ on $\Gamma$;
\item  Compute the poles of $\xi$, and discard those inside $\Gamma;$
\item Use linear least-squares to fit $h$ by the remaining poles outside $\Gamma.$
\end{enumerate}
In Step (2), suppose $\bz=[z_1,\dots,z_k]^{\T}\in \bbC^k$ contains the poles $z_j$ of $\xi\in \scrR_{(n)}$ outside $\Omega$ with their minimal distances ${d_j}$ to the boundary: $d_j=\min|z_j-\Gamma|$.  Then a rational function $r$ of the form 
\begin{equation}\label{eq:Laplacer}
r(z)=c_0+\sum_{j=1}^{k} \left(\frac{d_j c^{(1)}_j}{z-z_j}+\frac{d_j^2 c^{(2)}_j}{(z-z_j)^2}\right)
\end{equation}
is constructed in Step (3) as an analytic function to approximate $f(z)$. The coefficients $c_0$ and $\{(c_j^{(1)}, c_j^{(2)})\}_{j=1}^k$ are computed by a least-squares problem with boundary sampled points $\{x_j\}_{j=1}^m$ to minimize $\sum_{j=1}^m|\Re(r(x_j))-h(x_j)|^2$. For a more detailed discussion, see \cite[Section 24]{natr:2026}.}

In this framework, we refer to the algorithm employing AAA to compute the rational approximation $\xi$ in Step (1) as {\sf AAA-LS}, whereas the implementation using {\sf b-d-Lawson}(40) is denoted as {\sf b-d-Lawson-LS}.
Specifically, for {\sf b-d-Lawson-LS}, we enforce four interpolation constraints (highlighted in red in the (1,1)-subfigure of Figure \ref{figure_add}) when computing $\xi$ in Step (1). The rationale behind these constraints is that these points lie in a region where $f(z)$ admits a significant analytic extension (see \cite[Figure 24.2]{natr:2026}), meaning they typically exhibit better smoothness or analytic properties. This helps ensure that the rational approximation provides more accurate boundary conditions. 
In cases where certain points in the boundary condition carry greater significance—for instance, when dealing with noisy data—such interpolation constraints can similarly be imposed. 

Under these settings, we numerically solve problem \eqref{eq:laplace} using \textsf{b-d-Lawson-LS} and conduct accuracy comparisons with \textsf{AAA-LS} across different approximation degrees $n$. For our computational experiments, we employ $2000$ equally distributed discretization points along $t\in [0,2\pi]$ while evaluating the solution accuracy on refined nodes with $10000$ points. Since $r(z)$ in \eqref{eq:Laplacer} is pole-free in $\Omega$, it is analytic therein, ensuring that its real part $u(z) = \Re(r(z))$ is harmonic and naturally satisfies
$$\Delta u(z) = 0, \quad z \in \Omega.$$ Consequently, the accuracy of $u(z)$ for solving the Laplace equation \eqref{eq:laplace} reduces to how well it approximates the boundary condition $u(z) = h(z)$ on $\Gamma = \partial \Omega$. 
The comparative performance results are presented visually in Figure \ref{figure_add}, demonstrating the boundary condition approximation effectiveness (maximum error) of both methods.
\begin{figure}[h!!!]
\begin{center}
{\includegraphics[width= 5.9in ]{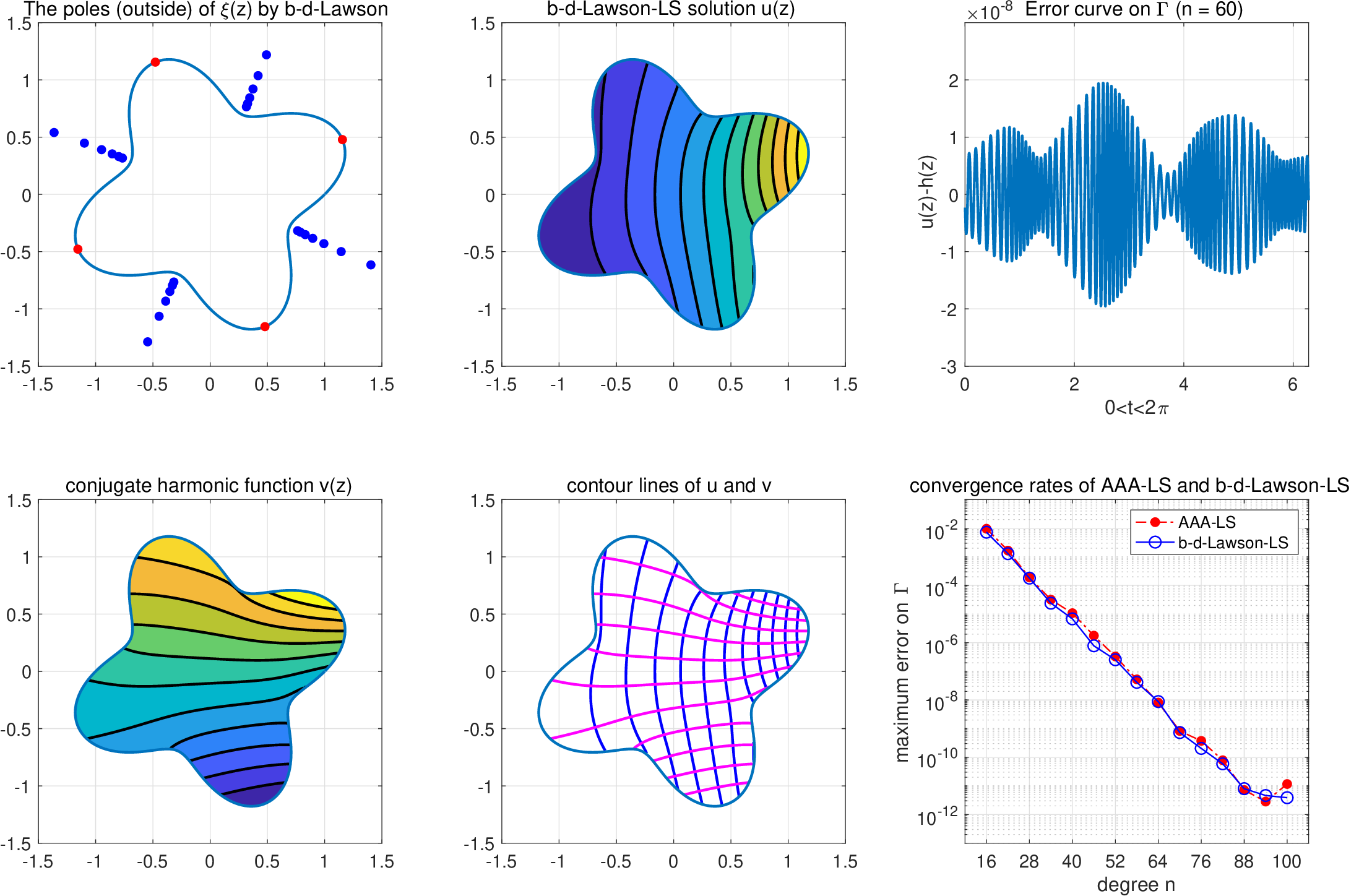} }
\caption{\small 
{
Illustration of the {\sf b-d-Lawson-LS} method applied to a Laplace Dirichlet problem on a smooth nonconvex domain $\Omega$, with boundary data $h(z) = \exp(\Re(z))$. First, $h$ is approximated by a complex rational function $\xi \in \scrR_{(n)}$  for $n=60$ using {\sf b-d-Lawson} with four interpolation constraints (red); the poles (initially placed both inside and outside $\Omega$) resulting from  {\sf b-d-Lawson} are restricted to exterior poles (blue dots, the (1,1)-subfigure) after discarding interior ones. These poles then anchor a least-squares fit to $u(z)$ via $\Re(f(z))$, where $f(z) = u(z) +{\tt  i}v(z)$ is analytic in $\Omega$. The (1,1)-subfigure displays $\Gamma=\partial\Omega$, exterior poles (blue), and interpolation constraints (red) for {\sf b-d-Lawson}; the (1,2)-subfigure shows the resulting solution $u$; the (1,3)-subfigure plots the error $u|_\Gamma - h$; the (2,1)-subfigure visualizes the conjugate harmonic function $v(z) = \Im(f(z))$; the (2,2)-subfigure depicts the orthogonal level curves of $u$ and $v$, and the (2,3)-subfigure demonstrates exponential convergence of {\sf b-d-Lawson-LS} w.r.t. the degree $n$.
}
}
\label{figure_add}
\end{center}
\end{figure}

\end{example}


\begin{example} \label{eg5:lne0}
 In our final example, we approximate 
\[ \sign_{\mathbf{E} /\mathbf{F}}(x) = \begin{cases}
-1, & x\in \mathbf{E}\subset \bbC\\
1, & x \in \mathbf{F}\subset \bbC
\end{cases}
\]
by   $\xi \in \scrR_{(15)}$ where $ \mathbf{E}\cap \mathbf{F} = \emptyset$.  The problem of finding the minimax rational approximation $\xi$ over $\mathbf{E}\cup \mathbf{F}$ is known as the  Zolotarev sign problem (Problem Z4) and was recently considered in   \cite{trwi:2024} with the help of the  AAA method. As exploited in \cite{trwi:2024}, in the \texttt{Chebfun} AAA code {\tt aaa.m}, a modification was introduced as an option in July 2024 specified by an optional flag `sign' for this special problem Z4.  Choose $\mathbf{E} = \left\{  -3+\texttt{i} \cos\big(\frac{j\pi}{200}\big) : 0\leq j\leq 200 \right\}$  and let $\mathbf{F}$ be a set containing 2000 equally spaced points on the unit circle. Applying AAA,  AAA-Lawson(40) and {\sf b-d-Lawson}(40), we first plot the corresponding error curves in the first row in Figure \ref{figure7_4}. For each method, the circle (in black) with the radius $e(\xi)$ is also plotted. In particular,   the (1,3)-subfigure of Figure \ref{figure7_4} is from a plain AAA method without specifying the optional flag `sign', while the (1,4)-subfigure is from AAA-Lawson(40) with the option `sign'. One can observe that {\sf b-d-Lawson} can produce more accurate approximant for this example. 

To verify  {\sf b-d-Lawson} with interpolation conditions furthermore, we next fix two ending points of $\mathbf{E}$  as interpolation nodes for {\sf b-d-Lawson}(40), and present the error curve from  {\sf b-d-Lawson}(40) in the (2,3)-subfigure. Due to the interpolation conditions, we noticed that the maximum error $e(\xi)$ is larger than the one in the (1,2)-subfigure.  
 
\begin{figure}[h!!!]
\begin{center}
{\includegraphics[width= 5.8in, height  = 1.4in ]{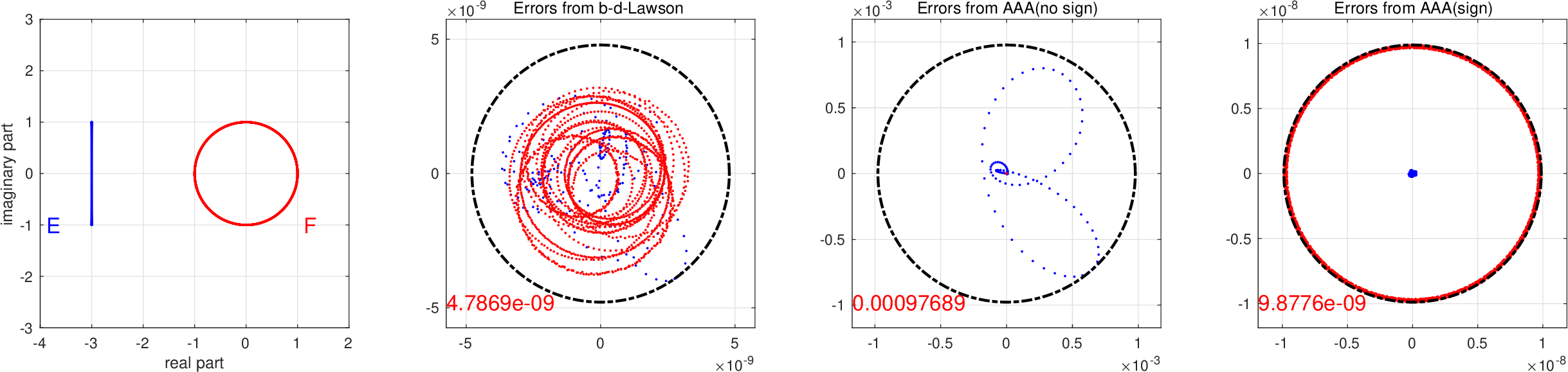}  
\vskip 15mm
\includegraphics[width= 2.8in, height  = 1.4in ]{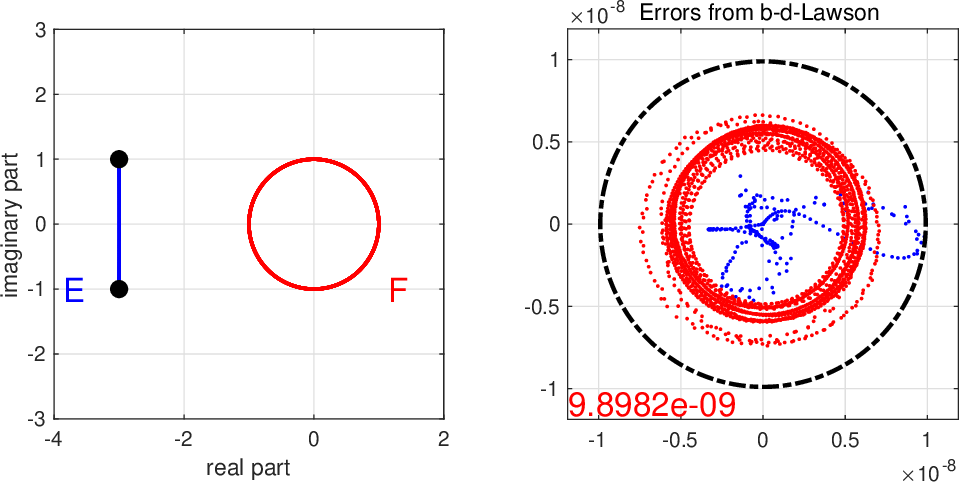} }
\caption{\small The (1,1)-subfigure: the sets $\mathbf{E}$ and $\mathbf{F}$;  the (1,2)-subfigure: errors associated with the samples from {\sf b-d-Lawson}($40$);  the (1,3)-subfigure: errors associated with the samples from  AAA without the option `sign';   the (1,4)-subfigure: errors associated with the samples from AAA-Lawson(40)   with   the option `sign'; the (2,2)-subfigure: the sets $\mathbf{E}$ and $\mathbf{F}$ with two interpolation nodes; the (2,3)-subfigure: errors associated with the samples from {\sf b-d-Lawson}($40$) subject to two interpolation conditions. For each method, the circle (in black) with the radius $e(\xi)$ is also plotted for $\xi\in \scrR_{(15)}$.}
\label{figure7_4}
\end{center}
\end{figure}
\end{example}

\section{Conclusions and further issues}\label{sec:conclusion}
This paper offers an extension of the recent dual-based development on the standard discrete rational minimax approximation \cite{zhyy:2025}. First, the proposed method {\sf b-d-Lawson} is a new implementation of {\sf d-Lawson} \cite{zhyy:2025} based on the barycentric representation, and can compute the minimax approximant $\xi\in \scrR_{(n)}$ for the given   data $\{(x_j,f_j)\}_{j=1}^m$. Since the barycentric representation is a good parameterization to meet the interpolation conditions, it can be incorporated into the dual framework of the interpolation constrained rational minimax problem \eqref{eq:baryInterpMinmax}. A dual problem of \eqref{eq:baryInterpMinmax} is established and properties of weak duality \eqref{eq:weakduality} and strong duality \eqref{eq:strongdual}  are discussed. Moreover, in this dual framework, a sufficient condition \eqref{eq:strongdualityRuttan} to achieve the minimax solution of \eqref{eq:baryInterpMinmax} is presented in Theorem \ref{thm:strongdualityRuttan}. A few numerical examples are tested to demonstrate the performance of {\sf b-d-Lawson}.  

We conclude this paper by mentioning a few issues that deserve further investigation. It should be noted that {\sf b-d-Lawson} cannot be guaranteed to handle \eqref{eq:baryInterpMinmax} in all cases. In fact, as we have mentioned that only a sufficient condition \eqref{eq:strongdualityRuttan} for the minimax approximant is provided in Theorem \ref{thm:strongdualityRuttan} under the assumption:  $\wtd\beta_j\ne 0~(1\le j\le \ell)$  in  the pair $(\wtd p,\wtd q)$ that achieves $d(\bw^*)$. Although in numerical results, the latter assumption usually holds  for a general function $f\not\equiv 0$, it fails for the constant function $f\equiv 0$. A remedy for this is to impose new constraints (e.g., $\sum_{j=1}^\ell|\wtd \beta_j|^2=1$, or $\wtd\beta_j\ne 0~(1\le j\le \ell)$) into \eqref{eq:linearminmax2}. This will lead to  new dual problems but will significantly complicate the computation of the associated dual functions. Another issue that has not been treated for {\sf b-d-Lawson} is its convergence: even though for $\ell=0$, the convergence analysis in \cite{zhha:2025} can be analogously applied, the general case with $\ell>0$ deserves a further  investigation.

\section*{Acknowledgments}
We would like to thank the anonymous referees for their constructive comments and suggestions that have improved the overall presentation and clarity of the paper.
 
{\small
\def\noopsort#1{}\def\l{\char32l}\def\v#1{{\accent20 #1}}
  \let\^^_=\v\def\hbk{hardback}\def\pbk{paperback}
\providecommand{\href}[2]{#2}
\providecommand{\arxiv}[1]{\href{http://arxiv.org/abs/#1}{arXiv:#1}}
\providecommand{\url}[1]{\texttt{#1}}
\providecommand{\urlprefix}{URL }

}

\begin{thebibliography}{10}

\bibitem{alyo:1983}
\newblock A.~Allison and N.~Young,
\newblock Numerical algorithms for the {Nevanlinna-Pick} problem,
\newblock \emph{Numer. Math.}, \textbf{42} (1983), 125--145.


\bibitem{angp:2025}
{\newblock A.~C. Antoulas, I.~V. Gosea and C.~Poussot-Vassal,
\newblock On the {L}oewner framework, the {K}olmogorov superposition theorem,
  and the curse of dimensionality,
\newblock \emph{SIAM Rev.}, \textbf{67} (2025), 737--770,
\newblock \urlprefix\url{https://doi.org/10.1137/24M1656657}.}

\bibitem{bapr:1972}
\newblock I.~Barrodale, M.~J.~D. Powell and F.~D.~K. Roberts,
\newblock The differential correction algorithm for rational $\ell_\infty$
  approximation,
\newblock \emph{SIAM J. Numer. Anal.}, \textbf{9} (1972), 493--504.

\bibitem{beck:2000}
\newblock B.~Beckermann,
\newblock {The condition number of real Vandermonde, Krylov and positive
  definite Hankel matrices},
\newblock \emph{Numer. Math.}, \textbf{85} (2000), 553--577.

\bibitem{begu:2017}
\newblock M.~Berljafa and S.~G\"{u}ttel,
\newblock The {RKFIT} algorithm for nonlinear rational approximation,
\newblock \emph{SIAM J. Sci. Comput.}, \textbf{39} (2017), A2049--A2071,
\newblock \urlprefix\url{https://doi.org/10.1137/15M1025426}.

\bibitem{bemi:1997}
\newblock J.-P. Berrut and H.~D. Mittelmann,
\newblock Matrices for the direct determination of the barycentric weights of
  rational interpolation,
\newblock \emph{J. Comput. Appl. Math.}, \textbf{78} (1997), 355--370.

\bibitem{bemi:1997b}
\newblock J.-P. Berrut and H.~Mittelmann,
\newblock Lebesgue constant minimizing linear rational interpolation of
  continuous functions over the interval,
\newblock \emph{Computers Math. Appl.}, \textbf{33} (1997), 77--86.

\bibitem{betr:2004}
\newblock J.-P. Berrut and L.~N. Trefethen,
\newblock Barycentric {L}agrange interpolation,
\newblock \emph{SIAM Rev.}, \textbf{46} (2004), 501--517.

\bibitem{bekl:2014}
\newblock J.-P. Berrut and G.~Klein,
\newblock Recent advances in linear barycentric rational interpolation,
\newblock \emph{J. Comput. Appl. Math.}, \textbf{259} (2014), 95--107.

\bibitem{boyd:2004}
\newblock S.~Boyd and L.~Vandenberghe,
\newblock \emph{Convex Optimization},
\newblock Cambridge University Press, 2004.

\bibitem{brnt:2021}
\newblock P.~D. Brubeck, Y.~Nakatsukasa and L.~N. Trefethen,
\newblock Vandermonde with {A}rnoldi,
\newblock \emph{SIAM Rev.}, \textbf{63} (2021), 405--415.

\bibitem{bygl:2001}
\newblock C.~Byrnes, T.~Georgiou and A.~Lindquist,
\newblock A generalized entropy criterion for {Nevanlinna-Pick} interpolation
  with degree constraint,
\newblock \emph{IEEE T. Automat. Contr.}, \textbf{46} (2001), 822--839.

\bibitem{chlo:1963}
\newblock E.~W. Cheney and H.~L. Loeb,
\newblock Two new algorithms for rational approximation,
\newblock \emph{Numer. Math.}, \textbf{3} (1961), 72--75.

\bibitem{chli:2000}
\newblock W.~Cheney and W.~Light,
\newblock \emph{A Course in Approximation Theory},
\newblock Pacific Grove, CA: Brooks/Cole, 2000.

\bibitem{clin:1972}
\newblock A.~K. Cline,
\newblock Rate of convergence of {L}awson's algorithm,
\newblock \emph{Math. Comp.}, \textbf{26} (1972), 167--176.

\bibitem{cops:2002}
\newblock C.~Coelho, J.~Phillips and L.~Silveira,
\newblock Passive constrained rational approximation algorithm using
  nevanlinna-pick interpolation,
\newblock in \emph{Proceedings 2002 Design, Automation and Test in Europe
  Conference and Exhibition}, 2002,
\newblock 923--930.

\bibitem{drnt:2024}
\newblock T.~A. Driscoll, Y.~Nakatsukasa and L.~N. Trefethen,
\newblock {AAA} rational approximation on a continuum,
\newblock \emph{SIAM J. Sci. Comput.}, \textbf{46} (2024), A929--A952.


\bibitem{ehle:1976}
{\newblock B.~L. Ehle,
\newblock On certain order constrained {C}hebyshev rational approximations,
\newblock \emph{J. Approx. Theory}, \textbf{17} (1976), 297--306.}


\bibitem{elli:1978}
\newblock G.~H. Elliott,
\newblock \emph{The construction of {Chebyshev} approximations in the complex
  plane},
\newblock PhD thesis, Faculty of Science (Mathematics), University of London,
  1978.




\bibitem{fint:2018}
\newblock S.-I. Filip, Y.~Nakatsukasa, L.~N. Trefethen and B.~Beckermann,
\newblock Rational minimax approximation via adaptive barycentric
  representations,
\newblock \emph{SIAM J. Sci. Comput.}, \textbf{40} (2018), A2427--A2455.

\bibitem{fifr:1990}
\newblock B.~Fischer and R.~Freund,
\newblock On the constrained {C}hebyshev approximation problem on ellipses,
\newblock \emph{J. Approx. Theory}, \textbf{62} (1990), 297--315.

\bibitem{flho:2007}
\newblock M.~S. Floater and K.~Hormann,
\newblock Barycentric rational interpolation with no poles and high rates of
  approximation,
\newblock \emph{Numer. Math.}, \textbf{107} (2007), 315--331.

\bibitem{govl:2013}
\newblock G.~H. Golub and C.~F. {Van Loan},
\newblock \emph{Matrix Computations},
\newblock 4th edition,
\newblock Johns Hopkins University Press, Baltimore, Maryland, 2013.

\bibitem{gogu:2021}
\newblock I.~V. Gosea and S.~G\"{u}ttel,
\newblock Algorithms for the rational approximation of matrix-valued functions,
\newblock \emph{SIAM J. Sci. Comput.}, \textbf{43} (2021), A3033--A3054,
\newblock \urlprefix\url{https://doi.org/10.1137/20M1324727}.

\bibitem{guse:1999}
\newblock B.~Gustavsen and A.~Semlyen,
\newblock Rational approximation of frequency domain responses by vector
  fitting,
\newblock \emph{IEEE Trans. Power Deliv.}, \textbf{14} (1999), 1052--1061.

\bibitem{gutk:1983}
\newblock M.~H. Gutknecht,
\newblock On complex rational approximation. {Part I}: The characterization
  problem,
\newblock in \emph{{Computational Aspects of Complex Analysis (H. Werneret
  at.,eds.). Dordrecht : Reidel}}, 1983,
\newblock 79--101.

\bibitem{gukl:2012}
\newblock S.~G\"{u}ttel and G.~Klein,
\newblock Convergence of linear barycentric rational interpolation for analytic
  functions,
\newblock \emph{SIAM J. Numer. Anal.}, \textbf{50} (2012), 2560--2580,
\newblock \urlprefix\url{https://doi.org/10.1137/120864787}.

\bibitem{henr:1962}
\newblock P.~Henrici,
\newblock \emph{Elements of Numerical Analysis},
\newblock Wiley, New York, 1962.

\bibitem{henr:1979}
\newblock P.~Henrici,
\newblock Barycentric formulas for interpolating trigonometric polynomials and
  their conjugates,
\newblock \emph{Numer. Math.}, \textbf{33} (1979), 225--234.

\bibitem{hesp:1991}
\newblock G.~Herglotz, I.~Schur, G.~Pick, R.~Nevanlinna and H.~Weyl,
\newblock \emph{Ausgew{\"a}hlte arbeiten zu den urspr{\"u}ngen der
  {S}chur-analysis}, vol.~16 of Teubner-Archiv zur Mathematik,
\newblock B.G. Teubner Verlagsgesellschaft mbH, Stuttgart, 1991.

\bibitem{high:2002}
\newblock N.~J. Higham,
\newblock \emph{Accuracy and Stability of Numerical Algorithms, Second
  edition},
\newblock SIAM, Philadephia, USA, 2002.

\bibitem{high:2004a}
\newblock N.~J. Higham,
\newblock The numerical stability of barycentric {L}agrange interpolation,
\newblock \emph{IMA J. Numer. Anal.}, \textbf{24} (2004), 547--556.

\bibitem{hoka:2020}
\newblock J.~M. Hokanson,
\newblock Multivariate rational approximation using a stabilized
  {S}anathanan-{K}oerner iteration, 2020,
\newblock \urlprefix\url{arXiv:2009.10803v1}.

\bibitem{ioni:2013}
\newblock A.~C. Ioni\c{t}\u{a},
\newblock \emph{Lagrange Rational Interpolation and its Applications to
  Approximation of Large-Scale Dynamical Systems},
\newblock PhD thesis, Rice University, USA, 2013.

\bibitem{kali:2008}
\newblock J.~Karlsson and A.~Lindquist,
\newblock Stability-preserving rational approximation subject to interpolation
  constraints,
\newblock \emph{IEEE T. Automat. Contr.}, \textbf{53} (2008), 1724--1730.

\bibitem{laws:1961}
\newblock C.~L. Lawson,
\newblock \emph{Contributions to the Theory of Linear Least Maximum
  Approximations},
\newblock PhD thesis, UCLA, USA, 1961.

\bibitem{lixu:1990}
\newblock J.-K. Li and G.-L. Xu,
\newblock On the problem of best rational approximation with interpolating
  constraints {(I)},
\newblock \emph{J. Comput. Math.}, \textbf{8} (1990), 233--240,
\newblock
  \urlprefix\url{http://global-sci.org/intro/article_detail/jcm/9436.html}.

\bibitem{lixu:1990b}
\newblock J.-K. Li and G.-L. Xu,
\newblock On the problem of best rational approximation with interpolating
  constraints {(II)},
\newblock \emph{J. Comput. Math.}, \textbf{8} (1990), 241--251,
\newblock
  \urlprefix\url{http://global-sci.org/intro/article_detail/jcm/9437.html}.

\bibitem{li:2008b}
\newblock R.-C. Li,
\newblock {Vandermonde} matrices with {Chebyshev} nodes,
\newblock \emph{Linear Algebra Appl.}, \textbf{428} (2008), 1803--1832.

\bibitem{lilb:2013}
\newblock X.~Liang, R.-C. Li and Z.~Bai,
\newblock Trace minimization principles for positive semi-definite pencils,
\newblock \emph{Linear Algebra Appl.}, \textbf{438} (2013), 3085--3106.

\bibitem{limp:2022}
\newblock P.~Lietaert, K.~Meerbergen, J.~P\'erez and B.~Vandereycken,
\newblock Automatic rational approximation and linearization of nonlinear
  eigenvalue problems,
\newblock \emph{IMA J. Numer. Anal.}, \textbf{42} (2022), 1087--1115.

\bibitem{loeb:1957}
\newblock H.~L. Loeb,
\newblock \emph{On rational fraction approximations at discrete points},
\newblock Technical report, Convair Astronautics, 1957,
\newblock Math. Preprint \#9.

\bibitem{lubr:2011}
\newblock L.~A. Luxemburg and P.~R. Brown,
\newblock The scalar {Nevanlinna--Pick} interpolation problem with boundary
  conditions,
\newblock \emph{J. Comput. Appl. Math.}, \textbf{235} (2011), 2615--2625,
\newblock
  \urlprefix\url{https://www.sciencedirect.com/science/article/pii/S0377042710006308}.

\bibitem{luws:2020}
\newblock L.~Monz{\'o}n, W.~Johns, S.~Iyengar, M.~Reynolds, J.~Maack and
  K.~Prabakar,
\newblock A multi-function {AAA} algorithm applied to frequency dependent line
  modeling,
\newblock in \emph{2020 IEEE Power \& Energy Society General Meeting (PESGM)},
  2020,
\newblock 1--5.

\bibitem{nase:2018}
\newblock Y.~Nakatsukasa, O.~S\`ete and L.~N. Trefethen,
\newblock The {AAA} algorithm for rational approximation,
\newblock \emph{SIAM J. Sci. Comput.}, \textbf{40} (2018), A1494--A1522.

\bibitem{natr:2020}
\newblock Y.~Nakatsukasa and L.~N. Trefethen,
\newblock An algorithm for real and complex rational minimax approximation,
\newblock \emph{SIAM J. Sci. Comput.}, \textbf{42} (2020), A3157--A3179.


\bibitem{natr:2026}
{\newblock Y.~Nakatsukasa and L.~N. Trefethen,
\newblock Applications of {AAA} rational approximation,
\newblock \emph{Acta Numer.}, to appear,
\newblock \urlprefix\url{https://arxiv.org/abs/2510.16237}.}



\bibitem{nast:2023}
\newblock Y.~Nakatsukasa, O.~S\`ete and L.~N. Trefethen,
\newblock The first five years of the {AAA} algorithm,
\newblock \emph{Intl. Cong. Basic Sci.},
\newblock \urlprefix\url{https://arxiv.org/pdf/2312.03565},
\newblock To appear.

\bibitem{neva:1919}
\newblock R.~Nevanlinna,
\newblock \"{U}ber beschr\"ankte funktionen, die in gegebenen punkten
  vorgeschrieben werte annehmen,
\newblock \emph{Ann. Acad. Sci. Fenn. Sel A.}, \textbf{13} (1919), 1--72.

\bibitem{nowr:2006}
\newblock J.~Nocedal and S.~Wright,
\newblock \emph{Numerical Optimization},
\newblock 2nd edition,
\newblock Springer, New York, 2006.

\bibitem{patr:2009}
\newblock R.~Pach{\'o}n and L.~N. Trefethen,
\newblock {Barycentric-Remez} algorithms for best polynomial approximation in
  the chebfun system,
\newblock \emph{BIT}, \textbf{49} (2009), 721--741.

\bibitem{pan:2016}
\newblock V.~Y. Pan,
\newblock How bad are {V}andermonde matrices?,
\newblock \emph{SIAM J. Matrix Anal. Appl.}, \textbf{37} (2016), 676--694.

\bibitem{pick1916}
\newblock G.~Pick,
\newblock \"{U}ber die beschr\"ankungen analytischer funktionen, welche durch
  vorgegebene funktionswerte bewirkt werden,
\newblock \emph{Math. Ann.}, \textbf{77} (1916), 7--23.

\bibitem{rish:1961}
\newblock T.~J. Rivlin and H.~S. Shapiro,
\newblock A unified approach to certain problems of approximation and
  minimization,
\newblock \emph{J. Soc. Indust. Appl. Math.}, \textbf{9} (1961), 670--699.

\bibitem{ruti:1976}
\newblock H.~Rutishauser,
\newblock \emph{Vorlesungen \"uber numerische Mathematik, Vol. 1, Birkh\"auser,
  Basel, Stuttgart, 1976; English translation, Lectures on Numerical
  Mathematics},
\newblock Walter Gautschi, ed., Birkh\"auser, Boston, 1976.

\bibitem{rutt:1985}
\newblock A.~Ruttan,
\newblock A characterization of best complex rational approximants in a
  fundamental case,
\newblock \emph{Constr. Approx.}, \textbf{1} (1985), 287--296.

\bibitem{saad:2003}
\newblock Y.~Saad,
\newblock \emph{Iterative Methods for Sparse Linear Systems},
\newblock 2nd edition,
\newblock SIAM, Philadelphia, 2003.

\bibitem{sava:1977}
\newblock E.~B. Saff and R.~S. Varga,
\newblock Nonuniqueness of best approximating complex rational functions,
\newblock \emph{Bull. Amer. Math. Soc.}, \textbf{83} (1977), 375--377.

\bibitem{sako:1963}
\newblock C.~K. Sanathanan and J.~Koerner,
\newblock Transfer function synthesis as a ratio of two complex polynomials,
\newblock \emph{IEEE T. Automat. Contr.}, \textbf{8} (1963), 56--58,
\newblock \urlprefix\url{https://doi.org/10.1109/TAC.1963.1105517}.

\bibitem{scwe:1986}
\newblock C.~Schneider and W.~Werner,
\newblock Some new aspects of rational interpolation,
\newblock \emph{Math. Comp.}, \textbf{47} (1986), 285--299.

\bibitem{tawi:1974}
\newblock G.~D. Taylor and J.~William,
\newblock Existence questions of {C}hebyshev by interpolating for the problem
  approximation rationals,
\newblock \emph{Math. Comp.}, \textbf{28} (1974), 1097--1103.

\bibitem{tayl:1945}
\newblock W.~J. Taylor,
\newblock Method of {L}agrangian curvilinear interpolation,
\newblock \emph{J. Res. Nat. Bur. Standards}, \textbf{35} (1945), 151--155.

\bibitem{this:1993}
\newblock J.-P. Thiran and M.-P. Istace,
\newblock Optimality and uniqueness conditions in complex rational {C}hebyshev
  approximation with examples,
\newblock \emph{Constr. Approx.}, \textbf{9} (1993), 83--103.

\bibitem{tref:2019a}
\newblock L.~N. Trefethen,
\newblock \emph{{Approximation Theory and Approximation Practice, {E}xtended
  Edition}},
\newblock SIAM, 2019.

\bibitem{trwi:2024}
\newblock L.~N. Trefethen and H.~D. Wilber,
\newblock Computation of {Z}olotarev rational functions, 2024,
\newblock \urlprefix\url{https://arxiv.org/abs/2408.14092}.

\bibitem{wals:1932}
\newblock J.~L. Walsh,
\newblock On interpolation and approximation by rational functions with
  preassigned poles,
\newblock \emph{Trans. Amer. Math. Soc.}, \textbf{34} (1932), 22--74.

\bibitem{wang:1980}
{\newblock R.~Wang,
\newblock \emph{Numerical Rational Approximation (in Chinese)},
\newblock Shanghai Science-Technology Press, Shanghai, 1980.}

\bibitem{wazh:2004}
{\newblock R.~Wang and G.~Zhu,
\newblock \emph{Rational Function Approximations and Their Applications (in
  Chinese)}, vol.~31 of Series in Information and Computational Sciences,
\newblock Science Press, Beijing, 2004.}

\bibitem{wern:1984}
\newblock W.~Werner,
\newblock Polynomial interpolation: {L}agrange versus {N}ewton,
\newblock \emph{Math. Comp.}, \textbf{43} (1984), 205--217.

\bibitem{will:1972}
\newblock J.~Williams,
\newblock Numerical {C}hebyshev approximation in the complex plane,
\newblock \emph{SIAM J. Numer. Anal.}, \textbf{9} (1972), 638--649.

\bibitem{will:1979}
\newblock J.~Williams,
\newblock Characterization and computation of rational {C}hebyshev
  approximations in the complex plane,
\newblock \emph{SIAM J. Numer. Anal.}, \textbf{16} (1979), 819--827.

\bibitem{wulb:1980}
\newblock D.~E. Wulbert,
\newblock On the characterization of complex rational approximations,
\newblock \emph{Illinois J. Math.}, \textbf{24} (1980), 140--155.

\bibitem{wuyt:1974}
\newblock L.~Wuytack,
\newblock On some aspects of the rational interpolation problem,
\newblock \emph{SIAM J. Numer. Anal.}, \textbf{11} (1974), 52--60.

\bibitem{yazz:2023}
\newblock L.~Yang, L.-H. Zhang and Y.~Zhang,
\newblock The {L}q-weighted dual programming of the linear {C}hebyshev
  approximation and an interior-point method,
\newblock \emph{Adv. Comput. Math.}, 50:80 (2024).



\bibitem{zhan:2026}
{\newblock L.-H. Zhang,
\newblock Optimality conditions for rational minimax approximations: Bridging
  {R}uttan's criteria to dual-based methods, 2026,
\newblock \urlprefix\url{https://arxiv.org/pdf/2602.07862v1}}

\bibitem{zhha:2025}
\newblock L.-H. Zhang and S.~Han,
\newblock A convergence analysis of {L}awson's iteration for computing
  polynomial and rational minimax approximations,
{\newblock \emph{SIAM J. Numer. Anal.}, \textbf{63} (2025), 2249--2271.} 

\bibitem{zhsl:2024}
\newblock L.-H. Zhang, Y.~Su and R.-C. Li,
\newblock Accurate polynomial fitting and evaluation via {A}rnoldi,
\newblock \emph{Numerical Algebra, Control and Optimization}, \textbf{14}
  (2024), 526--546.

\bibitem{zhyy:2025}
\newblock L.-H. Zhang, L.~Yang, W.~H. Yang and Y.-N. Zhang,
\newblock A convex dual problem for the rational minimax approximation and
  {L}awson's iteration,
\newblock \emph{Math. Comp.}, \textbf{94} (2025), 2457--2494,
\newblock DOI: https://doi.org/10.1090/mcom/4021.

\bibitem{zhzz:2025}
\newblock L.-H. Zhang, Y.-N. Zhang, C.~Zhang and S.~Han,
\newblock Rational minimax approximation of matrix-valued functions, 2025,
\newblock \urlprefix\url{https://arxiv.org/pdf/2508.06378v2}.

\end{thebibliography}
\end{document}